\def\numberwithin#1#2{\@ifundefined{c@#1}{\@nocnterrr}{%
  \@ifundefined{c@#2}{\@nocnterr}{%
  \@addtoreset{#1}{#2}%
  \toks@\expandafter\expandafter\expandafter{\csname the#1\endcsname}%
  \expandafter\xdef\csname the#1\endcsname
    {\expandafter\noexpand\csname the#2\endcsname
     .\the\toks@}}}}
\numberwithin{equation}{section}
\newtheorem{theorem}{Theorem}
\numberwithin{theorem}{section}
\newtheorem{example}[theorem]{Example}
\newtheorem{lemma}[theorem]{Lemma}
\newtheorem{remark}[theorem]{Remark}
\newenvironment{rmk}{\begin{remark} \em}{\end{remark}}
\newdimen\tableauside\tableauside=1.0ex
\newdimen\tableaurule\tableaurule=0.4pt
\newdimen\tableaustep
\def\phantomhrule#1{\hbox{\vbox to0pt{\hrule height\tableaurule
width#1\vss}}}
\def\phantomvrule#1{\vbox{\hbox to0pt{\vrule width\tableaurule
height#1\hss}}}
\def\sqr{\vbox{%
  \phantomhrule\tableaustep

\hbox{\phantomvrule\tableaustep\kern\tableaustep\phantomvrule\tableaustep}%
  \hbox{\vbox{\phantomhrule\tableauside}\kern-\tableaurule}}}
\def\squares#1{\hbox{\count0=#1\noindent\loop\sqr
  \advance\count0 by-1 \ifnum\count0>0\repeat}}
\def\tableau#1{\vcenter{\offinterlineskip
  \tableaustep=\tableauside\advance\tableaustep by-\tableaurule
  \kern\normallineskip\hbox
    {\kern\normallineskip\vbox
      {\gettableau#1 0 }%
     \kern\normallineskip\kern\tableaurule}%
  \kern\normallineskip\kern\tableaurule}}
\def\gettableau#1 {\ifnum#1=0\let\next=\null\else
  \squares{#1}\let\next=\gettableau\fi\next}
\newcommand{\be}{\begin{equation}}
\newcommand{\ee}{\end{equation}}
\newcommand{\bea}{\begin{eqnarray}}
\newcommand{\eea}{\end{eqnarray}}
\newcommand{\ba}{\begin{array}}
\newcommand{\ea}{\end{array}}
\newcommand{\id}{\hbox{1\kern-.27em l}}
\newcommand{\al}{\alpha}
\newcommand{\bet}{\beta}
\newcommand{\ka}{\kappa}
\newcommand{\la}{\lambda}
\newcommand{\cN}{\mathcal{N}}
\newcommand{\rar}{\rightarrow}
\newcommand{\non}{\nonumber}
\newcommand{\so}{\mathrm{so}}
\newcommand{\spl}{\mathrm{sp}}
\newcommand{\Spin}{\mathrm{Spin}}
\newtheorem{Pro}{Proposition}
\newtheorem{Def}{Definition}
\begin{document}
\title{Symbol Invariant of   Partition and the Construction}

\author{Bao Shou}
\address{Center  of Mathematical  Sciences, Zhejiang University,
Hangzhou 310027, CHINA}
\email{bsoul@zju.edu.cn}

\thanks{We would like to thank Ming Huang and Xiaobo Zhuang   for  many helpful discussions.  This work was supported by a grant from  the Postdoctoral Foundation of Zhejiang Province.}

\date{ \today}

\begin{abstract}
The symbol is used to describe the Springer correspondence for the classical groups.  We propose  equivalent definitions of symbols for rigid partitions in the $B_n$, $C_n$, and $D_n$ theories uniformly.  Analysing the new definition of symbol in detail,  we give  rules to construct symbol of a partition,   which are easy to remember and to operate on.  We introduce  formal operations of a partition, which   reduce the difficulties in the  proof  of the construction  rules. According these  rules, we give a closed formula of symbols for different  theories uniformly. As applications,  previous  results can be    illustrated  more clearly by  the construction rules of symbol.

\smallskip

\noindent \textbf{Keywords}. Partition, symbol, construction, surface operator,
Young diagram

\smallskip
\noindent \textbf{MSC (2010)}.05E10
\end{abstract}

\maketitle


\tableofcontents

\section{Introduction}
The Springer correspondence \cite{CM93} is an injective map from the unipotent conjugacy classes of a simple group to the set of unitary representations of the Weyl group. For the classical groups this map can be described explicitly in terms of certain   \textit{symbols} introduced in \cite{CM93}. For the Weyl group in the $B_n$, $C_n$, and $D_n$ theories, irreducible unitary representations are in one-to one correspondence with ordered pairs of partitions $[\al;\bet]$ where $\al$ is a partition of $n_\al$ and $\bet$ is a partition of $n_{\beta}$ satisfying    $n_\al+n_\bet= n$.

Gukov and Witten initiated a study of the $S$ duality of  surface operators in $\cN=4$ super Yang-Mills theories in \cite{GW06}\cite{Wit07}\cite{GW08}\cite{GW14}. The $S$-duality conjecture suggest that surface operators in  the theory with gauge group $G$ should have a counterpart  in the dual theory with gauge group $G^{L}$(the Langlands dual group of $G$)\cite{GW08}\cite{GNO76}. The surface operator is defined by prescribing  a certain singularity structure of fields near the surface supported by the operator.

A subclass of surface operators called {\it 'rigid'} surface operators corresponding to rigid partition  are expected to be closed under $S$-duality. There are two types rigid surface operators:  unipotent and semisimple.  They  correspond to certain ( unipotent and semisimple) conjugacy classes of the (complexified) gauge group. The unipotent  surface operators  are classified by partitions.  The  semisimple surface operators in the theories   are labelled by pair of  partitions.  Unipotent  surface operators arise  when one of these two partitions is empty.  The  symbol is an invariant of partitions under the $S$ duality map. Using the symbol invariant,  Wyllard made some explicit proposals for how the $S$-duality map should act on rigid surface operators in \cite{Wy09}. Other aspects of the surface operators have been studied in \cite{GM07}\cite{DGM08}\cite{GW14}\cite{Sh06}.

In \cite{ShO06}, relying  on the characteristic of symbol, we found that a new  subclass of rigid surface operators in $B_n$ and $C_n$ theories are   related by $S$-duality map.  To find more $S$-duality maps  for rigid surface operators, we need characterize the structure of symbol further.  However, litter work  has been done on it.  The calculation of symbol is a complicated  and tedious work. In \cite{ShO06}, we  simplify the construction of symbol by mapping  a partition into two    partitions  with only  even rows.  A simple and even explicit  formula of  symbol need to be found.

In this paper, we attempt to extend the analysis in \cite{Wy09}\cite{ShO06}  to study the construction of symbol further. Because of no noncentral rigid conjugacy classes in $A_n$ series, we do not discuss surface operators in this case. We also omit exceptional groups, which are more complicated. We will concentrate on the $B_n, C_n$, and $D_n$ series. Our main result  is the following table of computational  rules of symbols. We determine  the contributions to symbol for each row of a partition in the  $B_n(t=-1)$, $C_n(t=0)$, and $D_n(t=1)$ theories  uniformly as follows.
\begin{center}
\begin{tabular}{|c|c|c|c|}\hline
\multicolumn{4}{|c|}{ Contribution to  symbol of the $i$\,th row of a partition }\\ \hline
Parity of  the length of  row & Parity of $i$ &  $L$ &  Contribution to symbol \\ \hline
odd & even  & $\frac{1}{2}(\sum^{m}_{k=i}n_k+1)$ & $\Bigg(\!\!\!\ba{c}0 \;\; 0\cdots \overbrace{ 1\;\; 1\cdots1}^{L} \\
\;\;\;0\cdots 0\;\; 0\cdots 0 \ \ea \Bigg)$   \\ \hline
even & odd    & $\frac{1}{2}(\sum^{m}_{k=i}n_k)$ & $\Bigg(\!\!\!\ba{c}0 \;\; 0\cdots \overbrace{ 1\;\; 1\cdots1}^{L} \\
\;\;\;0\cdots 0\;\; 0\cdots 0 \ \ea \Bigg)$  \\ \hline
even & even    & $\frac{1}{2}(\sum^{m}_{k=i}n_k)$ &  $\Bigg(\!\!\!\ba{c}0 \;\; 0\cdots 0\;\; 0 \cdots 0 \\
\;\;\;0\cdots \underbrace{1 \;\;1\cdots 1}_{L} \ \ea \Bigg)$  \\ \hline
odd & odd     & $\frac{1}{2}(\sum^{m}_{k=i}n_k-1)$  &  $\Bigg(\!\!\!\ba{c}0 \;\; 0\cdots 0\;\; 0 \cdots 0 \\
\;\;\;0\cdots \underbrace{1\; \;1\cdots 1}_{L} \ \ea \Bigg)$  \\ \hline
\end{tabular}
\end{center}
These rules are easy to remember and  operator on, which is equivalent to  the following  closed formula of  symbol.
\begin{Pro}\label{Fbcdi}
For a partition $\lambda=m^{n_m}{(m-1)}^{n_{m-1}}\cdots{1}^{n_1}$, we introduce two notations
$$\Delta^{T}_i=\frac{1}{2}(\sum^{m}_{k=i}n_k+\frac{1+(-1)^{i+1}}{2}),\quad\quad  P^{T}_i=\frac{1+\pi_i}{2}$$
where the superscript $T$ indicates it is related to the top row of the symbol and
$$\pi_i=(-1)^{\sum^{m}_{k=i}n_k}\cdot(-1)^{i+1+t},$$
for $B_n(t=-1)$, $C_n(t=0)$, and $D_n(t=1)$ theories.
Other parallel notations
$$\Delta^{B}_i=\frac{1}{2}(\sum^{m}_{k=i}n_k+\frac{1+(-1)^{i}}{2}),\quad \quad  P^{B}_i=\frac{1-\pi_i}{2}$$
where the superscript $B$ indicates it is  related to the bottom row of the symbol and $P^{B}_i$ is a projection operator similar to  $P^{T}_i$.
Then the symbol $\sigma(\lambda)$   is
\begin{equation}\label{FbFbi}
\sigma(\lambda)=\sum^m_{i=1}\big\{ \Bigg(\!\!\!\ba{c}0\;\;0\cdots 0\;\; \overbrace{1\cdots 1}^{P^{T}_i \Delta^{T}_i}  \\
\;\;\;\underbrace{0\cdots 0 \;\;0 \cdots 0}_{l+t}\ \ea \Bigg)
+
 \Bigg(\!\!\!\ba{c}\overbrace{0\;\;0\cdots 0\;\; 0\cdots 0}^{l}  \\
\;\;\;0\cdots 0 \;\;\underbrace{1\cdots 1}_{P^{B}_i \Delta^{B}_i}\ \ea \Bigg)\big\}
\end{equation}
with  $l=(m+(1-(-1)^m)/2)/2$.
\end{Pro}
The equivalent definition of symbol (Definitions \ref{Dn}) proposed by us paly a critical role. The proof of the computational  rules  is  based on three lemmas (Lemmas \ref{Lsy}, \ref{Lsysy}, and \ref{Lo}).  The three lemmas are clearly proved by  introducing formal operators of partition (Lemmas \ref{Lem} and \ref{Lem-v}).

The following is an outline of  this article. In Section 2, we propose an  equivalent definition of symbols for the $B_n, C_n$,  and $D_n$ theories uniformly. Then we  introduce formal operations of a partition.  These formal operations  are  helpful for  understanding the new definitions of symbol.  In Section 3,  we discuss   the contribution to symbol  of each row in a partition formally.  For an even row, it is natural to calculate the contribution. For an odd row, the calculation  is artificial.  However, it is reasonable  to compute the contributions to symbol of a pair of   odd  rows together.    The new  definition of symbol and the formal operations of a  partition are crucial for  computing the contribution  of odd rows.  We also give  rules to construct symbol which are easy to memory and to operate on. Finally, according these  construction  rules of symbol, we give a closed formula of symbols for the $B_n, C_n$, and $D_n$ theories uniformly. In Section 4, as applications,   we prove several propositions which are consistent with the results   in \cite{ShO06}. The construction rules of symbol are helpful in  searching $S$-duality map of surface operators in $\cN=4$ super-Yang-Mills theories.

In fact, we give another completely new construction in \cite{SW17}.   There is another invariant of partition called fingerprint related to the Kazhdan-Lusztig map\cite{Lu79}\cite{Lu84}\cite{Sp92}. It is assumed that the  fingerprint invariant  is equivalent to the symbol invariant. Hopefully our constructions will be helpful in the proof of the equivalence. The mismatch of the total number of rigid surface operators was found in the $B_n/C_n$ theories through the study of the generating functions \cite{Wy09}\cite{HW07a}\cite{HW07b}\cite{HW08}. Clearly more work is required.

\section{Symbol  of rigid partitions in the $B_n$, $C_n$,  and $D_n$ theories}

Firstly,  we  recall  the definitions of symbols in \cite{CM93}.  The differences of the definitions of symbols between  different theories are obstructions to study the constructions of symbols. And then  find  equivalent  definitions of symbols which are convenient to compute symbols for different theories. Finally,   We  introduce several   formal operations acting on the   partition,  which  are not only helpful to understand the new definitions of symbol but also helpful for the proofs of lemmas in the  next section.

\subsection{Old definition of  symbol}
 A partition $\la$ of the positive integer $n$ is a decomposition $\sum_{i=1}^l \la_i = n$  ($\la_1\ge \la_2 \ge \cdots \ge \la_l$).  The integer $l$  is called the length of the partition. There is a one-to-one correspondence between partition and Young tableaux. For instance the partition $3^22^31$  corresponds to
\be
\tableau{2 5 7}\non
\ee
Young diagrams turn out to be extremely useful in the study of symmetric functions and group representation theory. They  are also useful tools for the construction of  the eigenstates of Hamiltonian System \cite{Sh11}.  For two partitions $\la$ and $\ka$,   $\la+\ka$ is the partition with parts $\la_i+\ka_i$.

Unipotent conjugacy classes  are classified by partitions.  They  are in one-to-one correspondence with partitions of $2n{+}1$($2n$) in the $B_n$($D_n$) theories with all even integers appear an even number of times.    And hey  are in one-to-one correspondence with partitions of $2n$   in the $C_n$ theory with  the  odd integers appear an even number of times. A partition in the $B_n$ and $D_n$($C_n$) theories is called {\it rigid} if it has no gaps (i.e.~$\la_i-\la_{i+1}\leq1$ for all $i$) and no odd (even) integer appears exactly twice. We  focus on rigid partitions  in this paper.

A partition $\la$ is called {\it special} if its transpose partitions  satisfy  the following conditions
\begin{eqnarray}  \label{special}
B_n: &\quad \la^t &\;\mbox{is orthogonal} \,,\non \\
C_n: &\quad \la^t &\;\mbox{is symplectic} \,, \\
D_n: &\quad \la^t &\;\mbox{is symplectic} \,.\non
\end{eqnarray}
It is easy to find that all of the  rows of a rigid special partition  in the $B_n$ theory are  odd. And those rows in  the $C_n$ and $D_n$  are  even.

The construction of \textit{symbol} in \cite{CM93} \cite{Wy09}  are introduced as follows
\begin{Def}
\underline{For a partition in the $B_n$ theory: }firstly,  we add $l-k$  to the $k$th part of the partition.
Next we arrange the odd parts and even  parts in  increasing  sequences $2f_i+1$ and  $2g_i$ from \textbf{right to left}, respectively.
Then we calculate terms
 \begin{equation}\label{ab}
   \al_i = f_i-i+1\quad,\quad\quad  \bet_i = g_i-i+1.
 \end{equation}
Finally the {\it symbol} is written  as follows
\be \label{symbol}
\left(\ba{@{}c@{}c@{}c@{}c@{}c@{}c@{}c@{}} \al_1 &&\al_2&&\al_3&& \cdots \\ &\bet_1 && \bet_2 && \cdots  & \ea \right).
\ee

  \textbf{Example:}  $B_{10}$,  $\la=3^3\,2^4\,1^4$, the symbol is
  \be \label{exs}
\left(\ba{@{}c@{}c@{}c@{}c@{}c@{}c@{}c@{}c@{}c@{}c@{}c@{}} 0&&0&&1&&1&&1&&1 \\ & 1 && 1 && 1 &&1&&2& \ea \right).
\ee
The  rows of this symbol can be written  as two partitions $[1^4,21^4]$  corresponding to a unitary representation of the Weyl group.

 \underline{ For a partition in the $C_n$ theory: } we  append an extra $0$ as the last part of the partition if the length of the partition is even. Next we calculate terms   $f_i$ and   $g_i$  as in the $B_n$ case. Then   we calculate   terms $\al_i = g_i-i+1$ and $\bet_i = f_i-i+1$.

  \textbf{Example:}  $C_{10}$,   $\la=3^2\,2^6\,1^2$, the length of the partition is even and  the symbol is
\be
\left(\ba{@{}c@{}c@{}c@{}c@{}c@{}c@{}c@{}c@{}c@{}c@{}c@{}} 0&&0&&1&&1&&1&&1 \\ & 1 && 1 && 1 &&1&&2& \ea \right).\non
\ee

  \textbf{Example:}   $C_{12}$,  $\la=3^2\,2\,1^4$, the length of the partition is odd and  the symbol is
\be
\left(\ba{@{}c@{}c@{}c@{}c@{}c@{}c@{}c@{}c@{}c@{}c@{}c@{}} &&&&1&&1&&1&&1 \\ &  &&  && 0 &&0&&2& \ea \right)\non
\ee

\underline{For a partition in the $D_n$ theory:} we calculate $f_i$ and $g_i$ exactly as in the $B_n$ case, and  then   calculate   terms $\al_i = g_i-i+1$ and  $\bet_i = f_i-i+1$.  A rigid partitions  in the  $D_n$ theory  always have at least one part equal to 1, so $\bet_1=0$. We  omit this entry and relabel $\beta_2 \rightarrow \beta_1$ etc.

  \textbf{Example:} $D_{10}$,  $\la=4^2\,3\,2^2\,1^5$, the length of the partition is even and  the symbol is
\be
\left(\ba{@{}c@{}c@{}c@{}c@{}c@{}c@{}c@{}c@{}c@{}c@{}c@{}} &&1&&1&&2&&2&&2 \\ &  && 0&& 0 &&0&&2& \ea \right)\non
\ee

\end{Def}

 \begin{rmk}
  \begin{enumerate}
    \item The locations between the entries  $\alpha_*$    and     $\beta_*$     the symbol are not essential.
    \item If not  specified, $l$ is the length of the partition  including the extra  0 appended. The following  table  illustrates the first step of the computation of  symbol
\begin{center}
\begin{tabular}{|r||c|c|c|c|}\hline
        $\lambda_{k}:$           & $\lambda_1$ & $\lambda_{2}$ &$\cdots$ & $\lambda_l$     \\ \hline
 $ l-k: $         & $l-1$ &$ l-2 $&$\cdots$ &$ 0 $                                         \\   \hline
 $l-k+\lambda_{k}: $& $l-1+\lambda_1$ &$ l-2+\lambda_2$&$\cdots $& $\lambda_l$              \\  \hline
\end{tabular}
\end{center}
 Note that $l-k$ is an increasing sequence from right to left,  whose first term is zero. This fact will be used frequently. The  sequences   $f_i$  and   $g_i$   are calculated as follows
 \begin{center}
\begin{tabular}{|r||c|c|c|c|}\hline
  $2f_i+1: $      & $\cdots$ & $2f_2+1$ & $2f_1+1$   \\ \hline
    $ f_i:  $     & $\cdots $ & $f_2 $   & $f_1 $    \\ \hline
\end{tabular}
\qquad
\begin{tabular}{|r||c|c|c|}\hline
 $ 2g_i:$     & $\cdots $ & $2g_2$  &  $2g_1$ \\ \hline
   $ g_i:$      & $\cdots$ & $g_2$   &  $g_1$ \\ \hline
\end{tabular}
\end{center}
which increase from  right to left.
\item  The terms in the sequence  $\lambda_i$ are independent of each other as well as  the terms in  the sequence $l-k+\lambda_{k}$.
  \end{enumerate}
\end{rmk}

In \cite{Wy09}, it is   pointed out   that  $\alpha_*$ is one more than  $\beta_*$  in the symbols  in the  $B_n, C_n$,  and $D_n$ theories.   Before showing it,  we give  some basic  facts. For the part $\lambda_{i}$ appearing even ($2m$) times in the partition,  according to the first step of the computation of symbol, we get the following sequence
  \begin{eqnarray}\label{SN}
  && (\lambda_{i}+l-i, \lambda_{i+1}+l-(i+1),\cdots,\lambda_{i+2m-1}+l-(i+2m-1) ) \\
  &&\qquad\qquad\qquad\qquad\qquad=(\lambda_{i}+l-i, \lambda_{i}+l-(i+1),\cdots,\lambda_{i}+l-(i+2m-1) ),\non
  \end{eqnarray}
 where $l$ is the length of the partition. With equal number of odd terms  and  even terms in  (\ref{SN}),  this sequence  leads to equal number of   $\alpha_*$ and $\beta_*$ in the symbol.  For the terms left in the sequence $l-k$,   the  number of the difference  between the  even terms and  odd terms is
\begin{equation}\label{SL}
  \frac{1-(-1)^l}{2}
\end{equation}
which is 1 if $l$ is odd and  0 if $l$ is even.

\begin{lemma}{\label{Lb}}
  The number of $\alpha_i$'s  is always one more than the number of $\beta_i$'s in the symbol of a partition $\lambda$ in the $B_n$  theory. The symbol  has the following form
\begin{equation}\label{s-1}
    \left(\ba{@{}c@{}c@{}c@{}c@{}c@{}c@{}c@{}c@{}c@{}} \alpha_1 &&  \alpha_2 &&\cdots  &&  \alpha_m  \\ & \beta_1 && \cdots && \beta_{m-1} && & \ea \right).
\end{equation}
\end{lemma}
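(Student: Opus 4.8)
The plan is to translate the assertion into a parity count and then exploit the two arithmetic facts recorded just above the lemma. First I would observe that, writing $s_k=\la_k+(l-k)$ for the shifted parts, the entries $\al_*$ are produced exactly by the \emph{odd} values $2f_i+1$ among the $s_k$, and the entries $\bet_*$ by the \emph{even} values $2g_i$. Since $\la_k\ge\la_{k+1}$ while $l-k$ is strictly decreasing, the $s_k$ are strictly decreasing and hence pairwise distinct; therefore the number of $\al$'s minus the number of $\bet$'s equals $(\#\{k:s_k\text{ odd}\})-(\#\{k:s_k\text{ even}\})=-\sum_{k=1}^l(-1)^{s_k}$, and it suffices to show this signed count equals $1$.

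Next I would reduce the sum to the genuinely unbalanced contributions. Grouping equal parts into blocks, a block of value $v$ and multiplicity $\mu$ contributes a run of $\mu$ consecutive integers to the $s_k$; by (\ref{SN}) an even-multiplicity block is balanced and so contributes $0$ to $\sum_k(-1)^{s_k}$. In the $B_n$ theory every even integer occurs an even number of times, so the only blocks of odd multiplicity are those of \emph{odd} value. Two further structural facts enter here: the total $\sum_k\la_k=2n+1$ is odd, forcing an odd number of odd parts; and the even parts occur in even total number. Consequently $l$ is odd — this is exactly the regime in which (\ref{SL}) yields the surplus $\tfrac{1-(-1)^l}{2}=1$ — and the number of odd-multiplicity (equivalently, odd-valued) blocks is odd.

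Finally I would evaluate the leftover contribution. Each odd-multiplicity block of odd length leaves a single unbalanced value whose parity is that of its endpoint $s=v+(l-a)$, where $a$ is the block's starting index; since $v$ is odd this excess lands on the $\al$-side precisely when $l-a$ is even. I would then track the parities of the starting indices $a$: the first such block starts at an odd index because all preceding blocks have even multiplicity (an even number of parts), and between two consecutive odd-multiplicity blocks the number of intervening parts is odd (one odd-multiplicity block together with even-multiplicity blocks), so the starting indices alternate in parity. Hence the successive excesses alternate $+1,-1,+1,\dots$, and because there is an odd number of them the total is $+1$, which is the claim.

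The main obstacle is precisely this last bookkeeping of the parities of the block positions: the consecutive-integer balancing of (\ref{SN}) and the surplus count (\ref{SL}) are routine, but showing that the leftover excesses \emph{alternate} and therefore sum to $+1$ — rather than cancelling to $0$ or summing to $-1$ — is where the $B_n$ hypotheses (odd total weight, even multiplicities of even parts, whence $l$ odd) must all be used together.
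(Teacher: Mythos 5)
Your proof is correct and follows essentially the same route as the paper's: reduce the claim to a parity count of the shifted parts $\la_k+(l-k)$, cancel the even-multiplicity (hence even-valued) blocks via (\ref{SN}), and extract the surplus $+1$ from the odd-valued blocks using that $l$ is odd. Your final alternating-sign bookkeeping of block starting indices is just a more explicit rendering of the step the paper compresses into its appeal to (\ref{SL}), so the two arguments agree in substance.
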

\begin{proof} According to the definition of orthogonal partition, all the  even integers  appear an even number of times . By using formula(\ref{SN}),  all the even parts  lead to equal number of $\alpha_*$ and $\beta_*$.

Since the length $l$ of the partition   is odd\footnote{ It  will be proved in Proposition \ref{Pb}.} and the  number of the total boxes in $\lambda$ is odd,  the number of  odd parts is odd.  According to formula (\ref{SL}), for these odd parts $\lambda_i$, the number of odd terms $l-i+\lambda_i$  corresponding to $f_i$ which lead to $\alpha_*$ is one more than the number of even terms $l-i+\lambda_i$  corresponding to $g_i$ which lead to $\beta_*$.

Combing the results of the above  paragraphs, we draw the conclusion.
\end{proof}

 \begin{rmk} The total number of $\alpha_*$ and $\beta_*$ is $l$.
 \end{rmk}

\begin{lemma}{\label{Lc}}
The number of $\alpha_i$'s  is always one more than the number of $\beta_i$'s in the symbol of a partition $\lambda$ in the $C_n$  theory.
\end{lemma}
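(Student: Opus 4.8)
The plan is to mirror the proof of Lemma \ref{Lb}, keeping in mind that in the $C_n$ case the roles of odd and even parts are interchanged: by the definition of the $C_n$ symbol, $\alpha_i = g_i - i + 1$ is produced by the even terms $2g_i$, while $\beta_i = f_i - i + 1$ is produced by the odd terms $2f_i + 1$. Writing $\mu_k = l - k + \lambda_k$ for the shifted parts (with $l$ the length including the appended $0$), the number of $\alpha_*$ equals the number of even $\mu_k$ and the number of $\beta_*$ equals the number of odd $\mu_k$. Hence it suffices to prove that, among the $\mu_k$, the even values outnumber the odd ones by exactly one. I note first that the appending rule forces $l$ to be odd, since a $0$ is added precisely when the original length is even.

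Next I would dispose of the balanced contributions exactly as in the preamble. The defining condition in the $C_n$ theory is that every odd integer occurs an even number of times; by (\ref{SN}) each such block of $2m$ equal parts produces $2m$ consecutive integers among the $\mu_k$, hence $m$ even and $m$ odd values, and therefore contributes equally to the $\alpha_*$ and to the $\beta_*$. Thus all odd-valued parts cancel out of the difference $N_e - N_o$ (even count minus odd count), and the entire imbalance is carried by the even-valued parts, together with the appended $0$ if present.

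To evaluate the remaining imbalance I would compute the signed count $S = N_e - N_o = \sum_{k=1}^{l} (-1)^{\mu_k} = (-1)^l \sum_{k=1}^{l} (-1)^{\lambda_k + k}$ directly. Grouping this sum into blocks of equal parts, a block of even multiplicity contributes $0$ because its terms alternate in sign, and within any block the signs alternate so that a block of odd multiplicity contributes only the sign of its first term. Since every odd-valued part has even multiplicity, the only surviving blocks are the even-valued parts of odd multiplicity and the appended $0$; for these the factor $(-1)^{\lambda_k}$ equals $+1$, so $S$ collapses to a signed sum over the starting positions of these blocks, which should evaluate to the value $\tfrac{1-(-1)^l}{2} = 1$ predicted by (\ref{SL}) when $l$ is odd.

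The main obstacle is this last bookkeeping step: showing that the surviving starting-position signs add up to exactly $1$. I expect to handle it using the rigid (no-gap) structure $\lambda = m^{n_m}\cdots 1^{n_1}$, which fixes the starting position of the block of value $v$ as $a_v = 1 + \sum_{k>v} n_k$ and makes $\mu_k$ strictly decreasing with steps of size $1$ inside a block and size $2$ at each descent. Tracking the parity of $a_v$ across these descents, and using once more that odd values have even multiplicity so that they do not disturb the relevant parities, should collapse the signed sum to a single uncancelled contribution and give $S = 1$, hence $\#\alpha = \#\beta + 1$. As a sanity check on the mechanism, for $\lambda = 3^2\,2^6\,1^2$ one appends a $0$ so that $l = 11$, the unique surviving block is this $0$ at position $11$, and it contributes $(-1)^{11} = -1$ to $\sum_k (-1)^{\lambda_k+k}$, so that $S = (-1)^{11}\cdot(-1) = 1$, consistent with the symbol displayed in the text.
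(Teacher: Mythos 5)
Your proposal follows the paper's own decomposition for its first two steps: the appending rule forces $l$ to be odd, and each block of equal odd-valued parts (necessarily of even multiplicity in the $C_n$ theory) produces a run of consecutive values $\mu_k=l-k+\lambda_k$, hence equally many even and odd ones, contributing equally to the $\alpha_*$ and the $\beta_*$ exactly as in the paper's use of (\ref{SN}). Where you diverge is in evaluating the residual imbalance. The paper closes the argument in one line: for an even part $\lambda_i$ the parity of $l-i+\lambda_i$ equals that of $l-i$, and by (\ref{SL}) the values $l-i$ remaining after the even-length runs of odd-part positions are deleted from $\{0,1,\dots,l-1\}$ contain exactly one more even number than odd. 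You instead reduce everything to the signed sum $S=\sum_k(-1)^{\mu_k}$ over the odd-multiplicity blocks and their starting positions. That is a legitimate alternative, but you stop at precisely the point where the content of the lemma lies: the claim that the surviving starting-position signs add up to $1$ is announced with ``should evaluate to'' and ``I expect to handle it'', not proved.

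The gap is real but fillable, and here is the missing bookkeeping. List the values $v_1>v_2>\cdots>v_r$ whose blocks have odd multiplicity; all of them are even values (possibly including the appended $0$), so $S=(-1)^l\sum_{j}(-1)^{a_{v_j}+v_j}=(-1)^l\sum_j(-1)^{a_{v_j}}$. Since $a_{v_{j+1}}-a_{v_j}=n_{v_j}+\sum_{v_{j+1}<k<v_j}n_k$, where every intermediate $n_k$ is even and $n_{v_j}$ is odd, consecutive starting positions alternate in parity, so the sum equals $(-1)^{a_{v_1}}$ if $r$ is odd and $0$ if $r$ is even. Now $l=\sum_k n_k\equiv r\pmod 2$, so $l$ odd forces $r$ odd; and $a_{v_1}=1+\sum_{k>v_1}n_k$ is odd because every $n_k$ with $k>v_1$ is even. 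Hence $S=(-1)^{l+a_{v_1}}=1$. With this inserted your argument is complete and agrees with the paper's conclusion; without it, the lemma has not been established, because everything before that point only shows that the odd-valued parts do not contribute to the difference.
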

\begin{proof} When the length of the partition  is even, we  should append an extra 0 as the last part of the partition. So  the  length of the partition is odd.

According to the definition of symplectic partition, all the  odd integers appear an even number of times. According to formula(\ref{SN}),  all these odd parts  lead to equal number of $\alpha_*$ and $\beta_*$.

Since  $l$   is odd and the  number of the total boxes in $\lambda$ is even,  the number of the even parts in the partition is odd. According to formula(\ref{SL}), for these odd parts $\lambda_i$, the number of even terms $l-i+\lambda_i$  corresponding to $g_i$ which lead to $\alpha_*$ is one more than the number of odd terms $l-i+\lambda_i$  corresponding to $f_i$ which lead to $\beta_*$.

Combing the results of the above  paragraphs, we draw the conclusion.
\end{proof}

 \begin{rmk}
 If the length of the partition  $l$ is odd, the total number of  $\alpha_*$ and $\beta_*$ is $l$. If $l$ is even,   the total number of  $\alpha_*$ and $\beta_*$ is $l+1$  because of the  extra  0  appended.
  \end{rmk}

\begin{lemma}{\label{Ld}}
The number of $\alpha_i$'s  is always one more than the number of $\beta_i$'s in the symbol of a partition $\lambda$ in the $D_n$  theory.
\end{lemma}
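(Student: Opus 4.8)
The plan is to mirror the proofs of Lemmas \ref{Lb} and \ref{Lc}, the only genuinely new feature being the deletion of the entry $\bet_1=0$ built into the $D_n$ construction. Recall that in the $D_n$ case the even values among $l-i+\la_i$ (the $g_i$) become the $\al_*$ and the odd values (the $f_i$) become the $\bet_*$, after which the single entry $\bet_1=0$ is removed. Hence, writing $G$ and $F$ for the number of even and odd terms in the strictly decreasing sequence $\{l-i+\la_i\}_{i=1}^{l}$, the symbol has $G$ entries $\al_*$ and $F-1$ entries $\bet_*$. It therefore suffices to prove two things: (i) $F=G$, so that before the deletion the two rows are balanced, and (ii) that $\bet_1=0$ genuinely occurs, so that exactly one entry is discarded.

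For (i) I would first dispose of the balanced contributions exactly as before: in the $D_n$ (orthogonal) case every even integer occurs an even number of times, so by (\ref{SN}) each even part contributes equally to $F$ and to $G$. Next I would record that $l$ is even. Indeed $\la\vdash 2n$; the even-valued rows contribute an even number of boxes (each such value being even), so the odd-valued rows also contain an even number of boxes, which forces the number of odd-valued rows to be even; since the number of even-valued rows is a sum of even multiplicities it is even too, and $l$ is their sum. With $l$ even, formula (\ref{SL}) gives the difference $(1-(-1)^l)/2=0$ between the leftover odd and even terms, whence $F=G$.

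For (ii), a rigid $D_n$ partition has no gaps and contains a part equal to $1$, so $\la_l=1$ and the smallest term of the sequence is $l-l+\la_l=1$, an odd number $2f_1+1$ with $f_1=0$; thus $\bet_1=f_1=0$. Deleting this one entry lowers the number of $\bet_*$ from $F$ to $F-1=G-1$, so the number of $\al_*$ exceeds the number of $\bet_*$ by exactly one, which is the assertion. I would emphasise the parallel with the other two series: there $l$ is odd and (\ref{SL}) already produces the surplus of one, whereas here $l$ is even and the surplus is produced instead by the removal of $\bet_1$.

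The step that needs real care, and the one I expect to be the main obstacle, is the claim packaged in (\ref{SL}) that the leftover odd integers produce no net imbalance once $l$ is even, since their contributions are position dependent. I would make this rigorous by a block computation: grouping the rows by value $j$ with multiplicity $n_j$ and writing $s_j=\sum_{j'>j}n_{j'}$ for the number of longer rows, one finds $G-F=\sum_{j}(-1)^{j}(-1)^{s_j}\sum_{r=1}^{n_j}(-1)^{r}$, so only the values $j$ with $n_j$ odd survive, and among these only the odd $j$ (the even ones having even multiplicity). Two consecutive surviving values flip the sign $(-1)^{s_j}$, because the intervening multiplicities are all even, and the number of odd values of odd multiplicity is congruent modulo $2$ to $\sum_{j\text{ odd}}n_j$, which is even. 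Hence these terms cancel in pairs and $G-F=0$, which is precisely the content of (\ref{SL}) for even $l$ and closes the argument.
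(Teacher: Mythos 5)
Your proof is correct and follows essentially the same route as the paper's: the parts of even multiplicity are balanced via (\ref{SN}), the evenness of $l$ kills the residual imbalance via (\ref{SL}), and the deletion of $\beta_1=0$ (forced by $\lambda_l=1$ for a rigid $D_n$ partition) produces the surplus of one $\alpha_*$. Your version is in fact tighter than the paper's on two points: you invoke the correct orthogonality condition (it is the \emph{even} integers that occur with even multiplicity in a $D_n$ partition, whereas the paper's proof misstates this as the odd integers), and your block computation of $G-F$ supplies the position-dependence argument that the paper's appeal to (\ref{SL}) leaves implicit.
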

\begin{proof}
For a partition in the $D_n$ theory, the length of the partition $l$ is even\footnote{It will be proved in Proposition \ref{Pd}.}and all the odd integers appear an even number times. According to formula(\ref{SN}),  these odd parts  lead to equal number of $\alpha_*$ and $\beta_*$.

Since $l$ is even and the  number of the total boxes in $\lambda$ is even,   the number of the even  parts in the partition is even. According to formula(\ref{SL}), for these even parts $\lambda_i$, the number of even terms $l-i+\lambda_i$  corresponding to $g_i$ which lead to $\alpha_*$ is equal to the number of odd terms $l-i+\lambda_i$  corresponding to $f_i$ which lead to $\beta_*$.

For rigid partition,  $\lambda_l=1$, so $l-l+\lambda_l=l-l+1$ corresponds to $2f_1+1$. Thus $f_1=0$ which means $\beta_1=f_1-1+1=0$. After $\beta_1$ is  omitted,  the number of $\alpha_i$'s in the top row of the symbol is always one more than the number of $\beta_i$'s on the bottom row.
\end{proof}

\begin{rmk}
Note that we delete one $\beta_1$ artifically and relabel $\beta_2 \rightarrow \beta_1$ etc. The total number of  $\alpha_*$ and $\beta_*$ is $l-1$.
\end{rmk}

\subsection{New definition of   symbol}
In this subsection, we propose equivalent definitions of symbols for the  $C_n$  and $D_n$ theories which  are  consistent with that for the $B_n$ theory  as   much as possible.
\begin{Def}\label{D2}
 The  symbols  of a partitions in the  $B_n $, $ C_n$,  and $D_n$ theories.
\begin{itemize}
  \item For the $B_n$ theory: first we add $l-k$  to the $k$~th part of the partition.
Next we arrange the odd parts of the sequence $l-k+\lambda_k$ in an increasing sequence $2f_i+1$  and arrange the even  parts in an increasing sequence $2g_i$.
Then we calculate terms
 \begin{equation*}
   \al_i = f_i-i+1\quad\quad\quad  \bet_i = g_i-i+1.
 \end{equation*}
 Finally we  write the {\it symbol} as
\begin{equation*}
  \left(\ba{@{}c@{}c@{}c@{}c@{}c@{}c@{}c@{}} \al_1 &&\al_2&&\al_3&& \cdots \\ &\bet_1 && \bet_2 && \cdots  & \ea \right).
\end{equation*}

  \item For the $C_n$ theory: \begin{description}
                                \item[1]If the length of partition is even,  compute the symbol as in the $B_n$ case, and then append an extra 0 on the left of the top row of the symbol.
                                \item[2]  If the length of the partition is odd, first append an extra 0 as the last part of the partition. Then compute the symbol as in the $B_n$ case. Finally,  we delete the 0 in the first entry of the bottom row of the symbol.
                              \end{description}
   \item For the $D_n$ theory: first append an extra 0 as the last part of the partition, and then compute the symbol as in the $B_n$ case. We  delete  two  0's in the first two entries of the bottom row of the symbol.
\end{itemize}

\begin{rmk}
   Compared  to the old definitions of symbol, $\alpha_*$ are all related to $f_*$  and $\beta_*$ are all related to $g_*$ in the new definitions of  symbol in the  $B_n$,  $C_n$,  and $D_n$ theories.
   \end{rmk}

\end{Def}

Before proving the equivalence between the new definition of symbol and  old one, we give several universal results. For the first step of the computation of  the symbol, we have the following  sequence
\begin{equation}\label{S}
  S=(l-1+\lambda_1, l-2+\lambda_2,  \cdots,  l-l+\lambda_l).
\end{equation}
 Arrange   the odd parts in a  decreasing sequence
\begin{equation}\label{So}
  (l-j_1+\lambda_{j_1}, l-j_2+\lambda_{j_2}, \cdots,l-j_{m}+ \lambda_{j_{m}})=(2f_{{m}}+1, 2f_{{m-1}}+1, \cdots, 2f_{1}+1).
\end{equation}
  And arrange    the even parts in a decreasing sequence
 \begin{equation}\label{Se}
  (l-i_1+\lambda_{i_1}, l-i_2+\lambda_{i_2}, \cdots, l-{i_n}+\lambda_{i_{n}})=(2g_{{n}}, 2g_{{n-1}}, \cdots, 2g_{1}).
 \end{equation}
Introduce the following notations
  \begin{equation}\label{AB}
    A_a=f_{a}-a+1, \quad\quad B_b=g_{b}-b+1,
  \end{equation}
where  $A_a$ corresponds to $f_{a}$ and   $B_b$ corresponds to $g_{b}$.  We write $A_*$ and $B_*$ as follows
\be\label{SA}
    \left(\ba{@{}c@{}c@{}c@{}c@{}c@{}c@{}c@{}c@{}c@{}} A_1 &&  A_2 &&\cdots  &&  A_m  \\ & B_1 && \cdots && B_{n} && & \ea \right).
\ee

If we append an extra 0 as the last part of the partition, the  sequence  corresponding to (\ref{S}) is
\begin{equation}\label{S1}
  S^{+}=((l+1)-1+\lambda_1, (l+1)-2+\lambda_2 , \cdots ,(l+1)-l+\lambda_l,0).
\end{equation}
Let $f^{'}_a$ and $g^{'}_a$ be the notations $f_a$ and $g_a$ for this sequence.  Arrange  the odd parts in a decreasing sequence
\begin{equation}\label{So1}
 (l+1-i_1+\lambda_{i_1}, l+1-i_2+\lambda_{i_2}, \cdots, l+1-{i_n}+\lambda_{i_{n}})=(2f{'}_{{n}}+1, 2f{'}_{{n-1}}+1, \cdots, 2f{'}_{1}+1).
\end{equation}
Comparing the left hand side of  formula (\ref{So1}) with that of  formula (\ref{Se}), we have
  $$(2f{'}_{{n}}+1, 2f{'}_{{n-1}}+1, \cdots, 2f{'}_{1}+1)=(2g_{{n}}+1, 2g_{{n-1}}+1, \cdots, 2g_{1}+1).$$
 Arrange  the even parts of sequence (\ref{S1}) in a decreasing sequence
  \begin{equation}\label{Se1}
   (l+1-j_1+\lambda_{j_1}, l+1-j_2+\lambda_{j_2}, \cdots,l+1-j_{m}+ \lambda_{j_{m}},0)=(2g{'}_{{m+1}}, 2g{'}_{{m}}, \cdots, 2g{'}_{2},0).
  \end{equation}
 Comparing the left hand side of  formula (\ref{Se1}) with that of  formula (\ref{So}), we have
  $$(2g{'}_{{m+1}}, 2g{'}_{{m}}, \cdots, 2g{'}_{2},0)=(2(f_{{m}}+1), 2(f_{{m-1}}+1), \cdots, 2(f_{1}+1),0).$$
 Let $A^{'}_a$ and $B^{'}_a$ be the notations $A_a$ and $B_a$ for the sequence  (\ref{S1})
  \begin{equation}\label{S1AB}
    A^{'}_b=f^{'}_{b}-b+1=g_{b}-b+1=B_b, \quad\quad B^{'}_a=g^{'}_{a}-a+1=(f_{{a-1}}+1)-a+1=A_{a-1}.
  \end{equation}
  Since the last term in  sequence (\ref{S1AB}) is 0, it corresponds to $B^{'}_1=A_0=0$
 which means
  \begin{equation}\label{SB}
   \left(\ba{@{}c@{}c@{}c@{}c@{}c@{}c@{}c@{}c@{}c@{}c@{}} && \,\,\,&&  A^{'}_1  &&   \cdots  && A^{'}_{n}& \\ & && B^{'}_1 && \cdots &&B^{'}_{m} && B^{'}_{m+1} \ea \right) =\left(\ba{@{}c@{}c@{}c@{}c@{}c@{}c@{}c@{}c@{}c@{}c@{}} && \,\,\,&&  B_1  &&   \cdots  && B_{n}& \\ &0 && A_1 && \cdots &&A_{m-1} && A_m \ea \right).
  \end{equation}

Similarly, we have the following proposition.
\begin{Pro}\label{c}
For a partition $\lambda$  in the $C_n$ theory, the new definition  of   symbol is equivalent to the old one.
\end{Pro}

  \begin{proof}  We need to consider two cases:

  \begin{itemize}
  \item  The length of  $\lambda$   is even. First, we compute the symbol by the new definition.  we have
$$ \alpha_i = f_i-i+1=A_i  \quad \quad  \beta_i =g_i-i+1=B_i.$$
Using formula(\ref{SA}), we get
\be\label{SAc}
  \left(\ba{@{}c@{}c@{}c@{}c@{}c@{}c@{}c@{}c@{}c@{}} A_1 &&  A_2 &&\cdots  &&  A_m  \\ & B_1 && \cdots && B_{n} && & \ea \right).
\ee
After appending  an extra 0 on the left of the  bottom row of formula(\ref{SAc}),  the symbol of $\lambda$   is
\be\label{newc}
    \left(\ba{@{}c@{}c@{}c@{}c@{}c@{}c@{}c@{}c@{}c@{}c@{}c@{}} 0 && A_1 &&  A_2 &&\cdots  &&  A_m  \\ &\,\,&& B_1 && \cdots && B_{n} && & \ea \right).
\ee
Next, we compute the  symbol by the old definition. After appending an extra 0 as the last part of the partition $\lambda$, using formula (\ref{SB}), we have
\begin{equation*}
    \left(\ba{@{}c@{}c@{}c@{}c@{}c@{}c@{}c@{}c@{}c@{}c@{}} && \,\,\,&&  B_1  &&   \cdots  && B_{n}& \\ &0 && A_1 && \cdots &&A_{m-1} && A_m \ea \right).
  \end{equation*}
Let $f^{'}_a$ and $g^{'}_a$ be the notations $f_a$ and $g_a$ for the sequence  (\ref{S1}) corresponding to the partition $\lambda\oplus 0$.
  According to the old definition, we have
  $$\alpha^{'}_i=g^{'}_i-i+1=B^{'}_i=A_{i-1},\quad\quad\beta^{'}_i= f^{'}_i-i+1=A^{'}_i=B_i.$$
  Thus the symbol of $\lambda$ is
\begin{equation*}
  \left(\ba{@{}c@{}c@{}c@{}c@{}c@{}c@{}c@{}c@{}c@{}c@{}c@{}} 0 && A_1 &&  A_2 &&\cdots  &&  A_m  \\ &\,\,&& B_1 && \cdots && B_{n} && & \ea \right)
\end{equation*}
which is equal to formula(\ref{newc}).

\item  The length of $\lambda$ is odd.   First, we compute the symbol by the old definition,   we have
$$ \alpha_i = g_i-i+1=B_i,  \quad \quad  \beta_i = f_i-i+1=A_i$$
which mean the symbol of $\lambda$ is
\begin{equation}\label{newcoo}
 \left(\ba{@{}c@{}c@{}c@{}c@{}c@{}c@{}c@{}c@{}c@{}c@{}} && \,\,\,&&  B_1  &&   \cdots  && B_{n}& \\ &\,\,&& A_1 && \cdots &&A_{m-1} && A_m \ea \right).
\end{equation}
Next,  we compute symbol by the new definition. After appending an extra 0 as the last part of partition $\lambda$, using formula (\ref{SB}), we have
\begin{equation*}
    \left(\ba{@{}c@{}c@{}c@{}c@{}c@{}c@{}c@{}c@{}c@{}c@{}} && \,\,\,&&  B_1  &&   \cdots  && B_{n}& \\ &0 && A_1 && \cdots &&A_{m-1} && A_m \ea \right).
  \end{equation*}
 After deleting  the 0 in the first entry of the bottom row of the symbol,  we have
  \begin{equation*}
 \left(\ba{@{}c@{}c@{}c@{}c@{}c@{}c@{}c@{}c@{}c@{}c@{}} && \,\,\,&&  B_1  &&   \cdots  && B_{n}& \\ &\,\,&& A_1 && \cdots &&A_{m-1} && A_m \ea \right)
\end{equation*}
which is equal to formula(\ref{newcoo}).
  \end{itemize}
\end{proof}

Similarly, we have the following proposition.
\begin{Pro}\label{d}
For a partition $\lambda$  in the $D_n$ theory, the new definition  of   symbol is equivalent to the odd one.
\end{Pro}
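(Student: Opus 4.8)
The plan is to prove Proposition \ref{d} by closely mirroring the argument already used for the $C_n$ case in Proposition \ref{c}, exploiting the fact that the universal computation encoded in formulas (\ref{S1})--(\ref{SB}) makes no reference to which theory we are in: it simply records how the symbol data $A_*,B_*$ transform when an extra $0$ is appended to the partition. The only genuine difference between $C_n$ and $D_n$ is the bookkeeping at the end (how many $0$'s are deleted from the bottom row, and the fact that $D_n$ partitions have even length), so the core of the proof is already done for us. First I would fix the partition $\lambda$ in the $D_n$ theory and recall from the relevant remark (following Lemma \ref{Ld}) that its length $l$ is even, so that in particular $\lambda_l=1$ for a rigid partition.

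Next I would compute the symbol by the \emph{old} definition. By Definition, in the $D_n$ case one calculates $f_i,g_i$ exactly as in the $B_n$ case, then sets $\alpha_i=g_i-i+1=B_i$ and $\beta_i=f_i-i+1=A_i$, and finally deletes the entry $\beta_1=0$. Using the notation of (\ref{AB}) and (\ref{SA}) this yields a symbol of the shape
\begin{equation*}
\left(\ba{@{}c@{}c@{}c@{}c@{}c@{}c@{}c@{}c@{}c@{}} && \,\,\,&& B_1 && \cdots && B_n & \\ & && A_2 && \cdots && A_{m-1} && A_m \ea \right),
\end{equation*}
where the leading $A_1=\beta_1=0$ has been struck out. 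Here I would note explicitly that $\lambda_l=1$ forces the smallest odd term to be $1=2f_1+1$, hence $f_1=0$ and $A_1=f_1-1+1=0$, which is precisely the entry deleted in the old recipe (this is the content of the last paragraph of the proof of Lemma \ref{Ld}).

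Then I would compute the symbol by the \emph{new} definition, which for $D_n$ instructs us to append an extra $0$ as the last part of $\lambda$, compute as in $B_n$, and delete the two extra $0$'s in the first two entries of the bottom row. Appending the $0$ puts us exactly in the situation of (\ref{S1})--(\ref{SB}), so the primed quantities satisfy $A'_b=B_b$ and $B'_a=A_{a-1}$ with $B'_1=A_0=0$ as recorded in (\ref{S1AB}) and (\ref{SB}). For the augmented partition $\lambda\oplus 0$ the new definition reads off $\alpha'_i$ and $\beta'_i$ from the $B_n$ rule applied to the shifted sequence; substituting (\ref{SB}) gives a bottom row beginning with two $0$'s, namely $B'_1=A_0=0$ together with the second entry that likewise vanishes because $A_1=0$ as observed above. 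Deleting these two $0$'s then leaves exactly the displayed old-definition symbol, completing the identification.

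The main obstacle — and the one place the $D_n$ argument is not a verbatim copy of $C_n$ — is the verification that \emph{two} bottom-row entries are genuinely $0$ and available for deletion, rather than one. The first $0$ is automatic from $B'_1=A_0=0$ in (\ref{SB}); the second requires the separate input $A_1=0$, which is exactly the rigidity fact $\lambda_l=1\Rightarrow f_1=0$ from Lemma \ref{Ld}. I would therefore make sure to invoke rigidity explicitly at this step and to check that after appending the $0$ the length becomes odd so that the intermediate $B_n$-style computation via (\ref{SB}) is legitimate. Once both zeros are accounted for, matching the two symbols entry-by-entry is routine, and the equivalence follows.
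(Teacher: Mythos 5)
Your proof is correct and follows essentially the same route as the paper: compute both symbols via the universal formula (\ref{SB}), observe that the old definition strikes out $\beta_1=A_1=0$ while the new one deletes the two bottom-row zeros $B'_1=A_0=0$ and $A_1=0$ (the latter coming from rigidity, $\lambda_l=1\Rightarrow f_1=0$), and match the results entry by entry. Your explicit emphasis on why the second deleted entry vanishes is a welcome clarification of a point the paper leaves partly implicit in the proof of Lemma \ref{Ld}.
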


  \begin{proof}
 According to the old construction of the  symbol,   we have
$$ \alpha_i = g_i-i+1=B_i,  \quad \quad  \beta_i = f_i-i+1=A_i,$$
leading to
\begin{equation}\label{newdo}
 \left(\ba{@{}c@{}c@{}c@{}c@{}c@{}c@{}c@{}c@{}c@{}c@{}} && \,\,\,&&  B_1  &&   \cdots  && B_{n}& \\ &\,\,&& A_1 && \cdots &&A_{m-1} && A_m \ea \right).
\end{equation}
After  omitting  the term $\beta_1=A_1=0$, we have the symbol as follows
\begin{equation}\label{newdoo}
 \left(\ba{@{}c@{}c@{}c@{}c@{}c@{}c@{}c@{}c@{}c@{}c@{}} && \,\,\,&&  B_1  &&   \cdots  && B_{n}& \\ &\,\,&& A_2 && \cdots &&A_{m-1} && A_m \ea \right).
\end{equation}

Next, we compute the symbol by the new definition. After appending an extra 0 as the last part of partition $\lambda$, using formula (\ref{SB}), we have
\begin{equation*}
    \left(\ba{@{}c@{}c@{}c@{}c@{}c@{}c@{}c@{}c@{}c@{}c@{}} && \,\,\,&&  B_1  &&   \cdots  && B_{n}& \\ &0 && A_1 && \cdots &&A_{m-1} && A_m \ea \right).
  \end{equation*}
  After  deleting  two 0's in the first two terms  of  the bottom row of the  symbol,  we have
  \begin{equation*}
 \left(\ba{@{}c@{}c@{}c@{}c@{}c@{}c@{}c@{}c@{}c@{}c@{}} && \,\,\,&&  B_1  &&   \cdots  && B_{n}& \\ &\,\,&& A_2 && \cdots &&A_{m-1} && A_m \ea \right)
\end{equation*}
which is equal to formula(\ref{newdoo}).
\end{proof}

According to  proofs of the  above propositions \ref{c} and \ref{d},   operations of  deleting a 0 or appending a 0 in the last step of the calculation  of symbol are not essential. So we have the following equivalences of symbols.

 \textbf{Example:} For the  $C_{10}$ partition $3^2\,2^6\,1^2$, the length of the partition is even
\be
\left(\ba{@{}c@{}c@{}c@{}c@{}c@{}c@{}c@{}c@{}c@{}c@{}c@{}} 0&&0&&1&&1&&1&&1 \\ & 1 && 1 && 1 &&1&&2& \ea \right)\sim \left(\ba{@{}c@{}c@{}c@{}c@{}c@{}c@{}c@{}c@{}c@{}c@{}c@{}} &&0&&1&&1&&1&&1 \\ & 1 && 1 && 1 &&1&&2& \ea \right) \non
\ee

where  $\sim$ means that   two symbols are equivalent.

  \textbf{Example:} For the  $C_{12}$ partition $3^2\,2\,1^4$, the length of the partition is odd. We have
\be
\left(\ba{@{}c@{}c@{}c@{}c@{}c@{}c@{}c@{}c@{}c@{}c@{}c@{}} &&&&1&&1&&1&&1 \\ &  &&  && 0 &&0&&2& \ea\right)\sim
\left(\ba{@{}c@{}c@{}c@{}c@{}c@{}c@{}c@{}c@{}c@{}c@{}c@{}} &&&&1&&1&&1&&1 \\ & \,\,\, && 0 && 0 &&0&&2& \ea \right).\non
\ee

  \textbf{Example:} For the  $D_{10}$ partition $4^2\,3\,2^2\,1^5$,  we have
\be
\left(\ba{@{}c@{}c@{}c@{}c@{}c@{}c@{}c@{}c@{}c@{}c@{}c@{}c@{}c@{}} &&&&1&&1&&2&&2&&2 \\ &  &&  && 0&& 0 &&0&&2& \ea \right)\sim\left(\ba{@{}c@{}c@{}c@{}c@{}c@{}c@{}c@{}c@{}c@{}c@{}c@{}c@{}c@{}} &&\,\,\, &&1&&1&&2&&2&&2 \\ &  0&&  0&& 0&& 0 &&0&&2& \ea \right).\non
\ee

The above  equivalences  imply the  following definition of symbol.
\begin{Def}\label{Dn} The  symbol  of a partition in the  $B_n $, $ C_n$,  and $D_n$ theories.
\begin{itemize}
    \item For the $B_n$ theory: first we add $l-k$  to the $k$th part of the partition.
Next we arrange the odd parts and  the even  parts of the sequence $l-k+\lambda_k$ in an increasing sequence $2f_i+1$  and  an increasing sequence $2g_i$ respectively.
Then we calculate terms
 \begin{equation*}
   \al_i = f_i-i+1\quad\quad\quad  \bet_i = g_i-i+1.
 \end{equation*}
 Finally we  write the {\it symbol} as
\begin{equation*}
  \left(\ba{@{}c@{}c@{}c@{}c@{}c@{}c@{}c@{}c@{}c@{}} \alpha_1 &&  \alpha_2 &&\cdots  &&  \alpha_m  \\ & \beta_1 && \cdots && \beta_{m-1} && & \ea \right).
\end{equation*}

  \item For the $C_n$ theory: if the length of partition is even,  compute the symbol as the $B_n$ case.  If the length of the partition is odd, first append an extra 0 as the last part of the partition. Then compute the symbol as in the $B_n$ case. The symbol has the following form
  \be\label{Dc}
   \left(\ba{@{}c@{}c@{}c@{}c@{}c@{}c@{}c@{}c@{}c@{}c@{}c@{}} &&\alpha_1 &&\alpha_2 &&\cdots &&\alpha_{m-1}&&\alpha_{m}\\ & \beta_1 && \beta_2 && \cdots &&\beta_{m-1} && \beta_{m} & \ea \right).
\ee
\item For the $D_n$ theory: first append an extra 0 as the last part of the partition, then compute the symbol as in the $B_n$ case.
 The symbol has the following form
\be\label{Dd}
\left(\ba{@{}c@{}c@{}c@{}c@{}c@{}c@{}c@{}c@{}c@{}c@{}c@{}c@{}c@{}} &&\,\,\, &&\alpha_1&&\alpha_2&&\cdots&&\alpha_{m-1}&&\alpha_m \\ &  \beta_1&&  \beta_2&& \beta_3&& \cdots &&\beta_m&&\beta_{m+1}& \ea \right).
\ee
\end{itemize}
\end{Def}

 \begin{rmk}
 The symbol  of a partition in  $B_n$, $ C_n$, and $D_n$ theories has the following form
 \end{rmk}
 \be\label{Dt}
    \boxed{\left(\ba{@{}c@{}c@{}c@{}c@{}c@{}c@{}c@{}c@{}c@{}} \alpha_1 &&  \alpha_2 &&\cdots  &&  \alpha_m  \\ & \beta_1 && \cdots && \beta_{m+t} && & \ea \right)},
\ee
where  $m=(l+(1-(-1)^l)/2)/2$ and $l$ is the length of the partition.  $t=-1$ for the $B_n$ theory,  $t=0$  for the $C_n$ theory, and  $t=1$ for the  $D_n$ theory.
\textit{The above  definitions of symbol will be the starting  point of the proofs in the following sections.}

\subsection{Formal operations of a partition}
We discuss  several formal operations of a partition which will be used in the construction of symbol in the next section.

The first operation is to append  an extra 0 as the last part of the  partition $\lambda$. Then we compute the  symbol of the new partition $\lambda\oplus0$. In fact, this operation leads to formula(\ref{SN}). Note that for a partition in the $D_n$ theory and for a partition with the first row is odd in  the $C_n$ theory we need append an extra 0 as the last part of the partition. The partition appended an extra 0 is notated as $\lambda^0$. And $l^0$ is the length of the partition $\lambda^0$.
\begin{lemma}\label{Lem} For a partition in the  $B_n$, $C_n$,  and $D_n$ theories, the symbol is
 \be\label{ss}
    \left(\ba{@{}c@{}c@{}c@{}c@{}c@{}c@{}c@{}c@{}c@{}} \alpha_1 &&  \alpha_2 &&\cdots  &&  \alpha_m  \\ & \beta_1 && \cdots && \beta_{m+t} && & \ea \right)
\ee
where  $t=-1$ for the  $B_n$ theory,  $t=0$  for the $C_n$ theory,  and  $t=1$ for the $D_n$ theory. $\lambda^0$ denote    the partition appended an extra 0 if needed for the computation of symbol.  Then the symbol of $\lambda^{'}=\lambda^0\oplus0^1$ computed as a partition  in the $B_n$ theory is
 \begin{equation}\label{ls}
    \left(\ba{@{}c@{}c@{}c@{}c@{}c@{}c@{}c@{}c@{}c@{}c@{}} && \,\,\,&&  \beta_1  &&   \cdots  && \beta_{m+t}& \\ &0 && \alpha_1 && \cdots &&\alpha_{m-1} && \alpha_m \ea \right).
  \end{equation}
And the symbol of $\lambda^0\oplus0^2$ computed as a partition  in the $B_n$ theory is
    \begin{equation}\label{l0}
    \left(\ba{@{}c@{}c@{}c@{}c@{}c@{}c@{}c@{}c@{}c@{}c@{}c@{}}0&& \alpha_1 &&  \alpha_2 &&\cdots  &&  \alpha_m  \\ &  0 && \beta_1 && \cdots && \beta_{m+t} && & \ea \right).
\end{equation}
\end{lemma}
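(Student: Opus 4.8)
The plan is to derive both identities by iterating the single-append formula (\ref{SB}), which records precisely the effect on a symbol of attaching one extra $0$ as the last part of a partition. Everything reduces to reading off (\ref{SB}) twice and keeping careful track of the two rows and of the accumulating zeros.

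First I would fix the bookkeeping. By hypothesis the symbol (\ref{ss}) is the $B_n$-symbol of $\lambda^0$, so in the notation of (\ref{AB})--(\ref{SA}) we may identify $A_i=\alpha_i$ and $B_j=\beta_j$, with $m$ entries on the top row and $n=m+t$ entries on the bottom row. Read with these identifications, (\ref{SB}) says that appending one $0$ to $\lambda^0$ realizes the transformation
\[
\tau:\ \left(\ba{c} \alpha_1\ \cdots\ \alpha_m \\ \beta_1\ \cdots\ \beta_{m+t} \ea\right)\ \longmapsto\ \left(\ba{c} \beta_1\ \cdots\ \beta_{m+t} \\ 0\ \ \alpha_1\ \cdots\ \alpha_m \ea\right),
\]
i.e. the new top row is the old bottom row, and the new bottom row is the old top row with a single $0$ prepended. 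Since $\lambda'=\lambda^0\oplus0^1$ is exactly $\lambda^0$ with one $0$ appended, its $B_n$-symbol is $\tau$ applied to (\ref{ss}), which is precisely (\ref{ls}). This settles the first claim.

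For the second identity I would apply $\tau$ once more, using the factorization $\lambda^0\oplus0^2=\lambda'\oplus0^1$. The one point that needs genuine justification -- and this is the main (if modest) obstacle -- is that (\ref{SB}) may be applied to $\lambda'$ \emph{verbatim}. This is legitimate because the derivation of (\ref{SB}) used only the uniform parity reversal $l-k+\lambda_k\mapsto (l+1)-k+\lambda_k$ together with the single even term $0$ that is appended; neither ingredient assumes anything about the input beyond its being a partition, and the appended $0$ is always the minimal term, forcing $B'_1=0$. Hence $\tau$ is a well-defined operation on symbols that can be composed. Applying $\tau$ to (\ref{ls}) promotes its bottom row $(0,\alpha_1,\ldots,\alpha_m)$ to the new top row and prepends a $0$ to its top row $(\beta_1,\ldots,\beta_{m+t})$, yielding
\[
\left(\ba{c} 0\ \ \alpha_1\ \cdots\ \alpha_m \\ 0\ \ \beta_1\ \cdots\ \beta_{m+t} \ea\right),
\]
which is exactly (\ref{l0}). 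A final sanity check is that the row lengths evolve correctly under the two applications of $\tau$ ($(m,\,m+t)\mapsto(m+t,\,m+1)\mapsto(m+1,\,m+t+1)$), in agreement with the shapes displayed in (\ref{ls}) and (\ref{l0}); this is immediate from the form of $\tau$.
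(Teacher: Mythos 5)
Your proposal is correct and follows essentially the same route as the paper: the paper's proof likewise obtains (\ref{ls}) directly from (\ref{SB}) and then obtains (\ref{l0}) by applying that single-append formula a second time. Your explicit justification that (\ref{SB}) applies verbatim to the once-appended partition, and the row-length check, are details the paper leaves implicit but do not change the argument.
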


  \begin{proof} Formula(\ref{ls}) is the result of formula(\ref{SB}).
Applying formula (\ref{ls}) twice, we get formula(\ref{l0}).
\end{proof}
\begin{rmk}\begin{enumerate}
                     \item After appending an extra 0, we have $\lambda^0_{l+1}=0$ which means
                     $$l^0+1-(l^0+1)+\lambda_{l^0+1}=0.$$ The superscript '0' means it is an index in the partition $\lambda^0$.  So we get $g^{'}_1=0$ which means $\beta^{'}_1=g^{'}_1-1+1=0$. $\beta^{'}_1=0$ is in the  first  entry  on the left side of the bottom row in  the symbol  (\ref{ls}).
                     \item After  appending an extra 0, the length of the partition is increased by one. The even terms of the sequence $l^0-k$ become the  odd terms of the  sequence $l^0+1-k$. And  the odd terms of the sequence $l^0-k$ become the  even terms of the sequence $l^0+1-k$.  The bottom row and  top row in symbol  reverse roles by comparing formula(\ref{l0}) with  (\ref{ls}).
                   \end{enumerate}
\end{rmk}

Next, we introduce another two formal  operations: the first one is to add a row $0^{l^0+2a}$ to a partition $\lambda$. And the second one is to add   a column $\lambda_0(=\lambda_1)$ to a partition $\lambda^0$. Now we discuss what happen to  the symbol of the result partitions.
\begin{lemma}\label{Lem-v}
For a partition $\lambda$ in the  $B_n$, $C_n$, and $D_n$ theories, the symbol is
 \be
    \left(\ba{@{}c@{}c@{}c@{}c@{}c@{}c@{}c@{}c@{}c@{}} \alpha_1 &&  \alpha_2 &&\cdots  &&  \alpha_m  \\ & \beta_1 && \cdots && \beta_{m+t} && & \ea \right)\non
\ee
where  $t=-1$ for the  $B_n$ theory,  $t=0$  for the $C_n$ theory, and  $t=1$ for the $D_n$ theory.
$\lambda^0$ denote   the partition appended an extra 0  if needed for the computation of symbol.  Then the symbol of the partition $\lambda^{'}=\lambda^0+0^{l^0+2a}$   is
\begin{equation}\label{ls-v}
\Bigg(\!\!\!\ba{c} \overbrace{0\;\cdots\; 0}^{a}\;\;\alpha_1\;\;\alpha_2 \cdots \alpha_m   \\
 \underbrace{0 \;\cdots \;0}_a \;\;\beta_1 \cdots  \beta_{m+t}\ \ea \Bigg),
\end{equation}
which is computed as a partition  in the $B_n$ theory
If we append an extra $\lambda_0 (=\lambda_1)$ as the first part of the partition $\lambda$, then the symbol of the partition  $\lambda^{''}=\lambda_0\oplus\lambda^0$ computed as a partition  in the $B_n$ theory is
    \begin{equation}\label{l0-v}
    \left(\ba{@{}c@{}c@{}c@{}c@{}c@{}c@{}c@{}c@{}c@{}c@{}c@{}} \alpha_1 &&  \alpha_2 &&\cdots  &&  \alpha_m  && \square^t \\ & \beta_1 && \cdots && \beta_{m+t} && \square_b && & \ea \right)
    \end{equation}
  where   $(\square^t,\square^b)=(\alpha_{m+1},\emptyset)$ or $(\square^t,\square^b)=(\emptyset, \beta_{m+t+1})$.

\end{lemma}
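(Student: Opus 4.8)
The plan is to prove the two formulas separately, treating the row-addition first since it is the cleaner of the two. For $\lambda'=\lambda^0+0^{l^0+2a}$ I would track what happens to the fundamental sequence $l-k+\lambda_k$ of Definition \ref{Dn}. Adding the row $0^{l^0+2a}$ means appending $l^0+2a$ zeros at the bottom of the partition, raising the length from $l^0$ to $l^0+(l^0+2a)=2l^0+2a$. The key observation is that for each \emph{original} part $\lambda_k$, the shift $l-k$ increases by exactly $l^0+2a$ (an even number), so $\lambda_k+(l-k)$ changes by an even amount and therefore keeps its parity. Thus the partition of the original entries into $f_*$ (odd) and $g_*$ (even) is unchanged, and each original $f_i$ (resp.\ $g_i$) simply shifts up by a fixed even constant $c:=l^0+2a$.

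Next I would account for the $l^0+2a$ newly appended zeros. For an appended zero in position $k$ the entry is $l-k+0=l-k$, and as $k$ ranges over the last $l^0+2a$ positions these values are exactly $0,1,2,\dots,l^0+2a-1$, i.e.\ $a$ pairs $\{2j,2j+1\}$ together with the extra structure coming from $l^0$. The clean way to see the outcome is to recall Remark after Lemma \ref{Lem}: appending a single $0$ swaps the roles of the two rows and prepends a $0$, and appending two zeros (formula (\ref{l0})) prepends a $0$ to each row while preserving the roles. Adding $l^0+2a$ zeros is the same as iterating the ``append $0^2$'' operation; each such pair contributes one extra leading $0$ to the top row and one to the bottom row via the $-i+1$ shift in (\ref{AB}). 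Carefully counting, the $a$ extra pairs (beyond those already absorbed in forming $\lambda^0$) produce exactly $a$ leading zeros in each row, and because the constant shift $c$ is even the newly created $A_*,B_*$ values that are genuinely new are all zero. This yields precisely (\ref{ls-v}). The main bookkeeping obstacle here is confirming that no spurious nonzero entries are introduced and that the count of leading zeros is exactly $a$ in each row, not $a\pm1$; I would verify this by comparing the number of odd versus even terms using formula (\ref{SL}) and the fact that $c$ is even.

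For the second formula, where we prepend a column by adding $\lambda_0=\lambda_1$ as a new first part, the approach is different: prepending a part at the \emph{top} increases every shift $l-k$ by $1$ for the original parts and adds one new large entry. Increasing $l-k$ by one flips the parity of every $l-k+\lambda_k$, so $f_*$ and $g_*$ swap roles exactly as in the single-append Lemma \ref{Lem} case—but now the new large part $\lambda_0+\,(l'-1)$ enters as the top-most term rather than a trailing zero. I would argue that the reversal of roles combined with this one new entry must reassemble into the original symbol with a single extra box, and that by a parity/counting argument (again via (\ref{SL}) and Lemmas \ref{Lb}–\ref{Ld}) this extra box lands either in the top row as a new $\alpha_{m+1}$ or in the bottom row as a new $\beta_{m+t+1}$, giving the dichotomy $(\square^t,\square^b)=(\alpha_{m+1},\emptyset)$ or $(\emptyset,\beta_{m+t+1})$ in (\ref{l0-v}). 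The hard part will be pinning down \emph{which} of the two cases occurs in terms of the data of $\lambda$; I expect this is governed by the parity of $l^0$ (equivalently, the theory type $t$ together with whether $\lambda_1$ is odd or even), and I would isolate it as the crux of the argument, deferring the explicit parity determination to the later computation in Section 3 where the row-by-row contributions are assembled.
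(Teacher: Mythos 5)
Both halves of your argument rest on a misreading of the underlying bookkeeping, and in each case the misreading is fatal rather than cosmetic. For the first formula you take $\lambda^0+0^{l^0+2a}$ to mean appending $l^0+2a$ zero parts, raising the length to $2l^0+2a$. But $+$ here is the paper's part-wise addition of partitions: the summand $0^{l^0+2a}$ has $l^0+2a$ parts, the first $l^0$ of which are absorbed by the existing parts of $\lambda^0$, so only $2a$ new zero parts are appended and the length becomes $l^0+2a$. On your reading the shift $l-k$ of each original part grows by $l^0+2a$, which you assert is even; this is false in the $B_n$ and $D_n$ theories, where $l^0$ is odd, so the roles of $f_*$ and $g_*$ would be exchanged and the two rows of the symbol would swap, contradicting (\ref{ls-v}), while in the $C_n$ theory your count of leading zeros would come out as $l^0/2+a$ rather than $a$. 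The bookkeeping obstacle you flag at the end is therefore not a residual $\pm1$ to be checked but a discrepancy of order $l^0$. The intended argument is much shorter: appending $2a$ zeros is the $a$-fold iteration of the ``append $0^2$'' operation, and each iteration prepends a single $0$ to each row by formula (\ref{l0}).

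For the second formula the error is of the same kind. Prepending $\lambda_0=\lambda_1$ increases the length by one, but it also shifts the position index of every original part by one, so the sequence entry attached to the original $\lambda_k$ becomes $(l^0+1)-(k+1)+\lambda_k=l^0-k+\lambda_k$, i.e.\ it is \emph{unchanged}. There is no parity flip and no exchange of $f_*$ and $g_*$: the whole old symbol survives verbatim, and the only novelty is the single new entry $(l^0+1)-1+\lambda_0=l^0+\lambda_1$, which exceeds the previous maximum $l^0-1+\lambda_1$ by exactly one and therefore joins whichever increasing sequence matches its parity as its largest element, producing one new $\alpha_{m+1}$ or one new $\beta_{m+t+1}$ at the right end of the corresponding row. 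That is the entire content of (\ref{l0-v}). Your route --- a global swap of $f_*$ and $g_*$ that is then supposed to ``reassemble into the original symbol with a single extra box'' --- would require two spurious effects to cancel exactly, and no counting argument via (\ref{SL}) will produce that cancellation; it is simply not needed once one notices that the shifts $l-k$ do not move.
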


  \begin{proof} After adding a row $0^{l^0+2a}$ under the first row of $\lambda^0$,  nothing  changed for the first $l^0$ terms in the sequence  $l^0-k+\lambda_k$.   Using formula(\ref{l0}) $m$ times,  we get  formula(\ref{ls-v}).

         Because of  the extra part $\lambda_0$,  there will be  an extra term in the symbol of the partition $\lambda^{''}$ compared with that of $\lambda^0$. It   appear at the end of the  row of the symbol, since $(l^0+1)-0+\lambda_0$ is the biggest one  of  the sequence  $(l^0+1)-k+\lambda_k$.
\end{proof}

\section{Construction of  symbol}
Our main strategy to construct symbol  is to sum the contribution of each row of the partition. The addition of symbol is defined by writing  the symbols right adjusted and simply add the entries that are 'in the same place'.  Unfortunately,  it is not natural  to consider the contribution   of  each odd row independently.  However, it is natural to consider the contribution of   two odd rows together. In the first subsection, we introduce three lemmas which are crucial  for constructing  of symbol. Then using these lemmas, we give simple rules to  calculate symbol.  Finally, we give an uniform  closed formula of symbol  for the partitions in the $B_n$, $C_n$, and $D_n$ theories.
\subsection{Three  Lemmas }
First, we analyse the contribution to symbol of  even rows in  the partition. Added an even row, the result  partition  is still   in the same theory. \textit{The  processes of the computation of symbol are the same in the $B_n$, $C_n$, and $D_n$ theories excepting   an extra 0 appended.}
\begin{lemma}\label{Lsy}
  For the partition $\lambda(\lambda_1=\cdots=\lambda_a>\cdots),(a>2b)$, the symbol   has the following form
\begin{equation}\label{sy}
 \left(\ba{@{}c@{}c@{}c@{}c@{}c@{}c@{}c@{}c@{}c@{}} \cdots&& \cdots && \alpha_{m-b+1} &&\cdots && \alpha_m \\   & \cdots&& \beta_{m-b+1+t} && \cdots && \beta_{m+t} &\ea \right).
 \end{equation}
  $l$ is the length of the partition  including the extra  0 appended   if needed.  If $l+\lambda_1$ is odd, the symbol of $\lambda^{'}=\lambda+1^{2b}$  has the following form
\begin{equation}\label{ao}
\left(\ba{@{}c@{}c@{}c@{}c@{}c@{}c@{}c@{}c@{}c@{}} \cdots&& \cdots && \alpha_{m-b+1}+1 &&\cdots && \alpha_m+1 \\   & \cdots&& \beta_{m-b+1+t} && \cdots && \beta_{m+t} &\ea \right).
\end{equation}
If $l+\lambda_1$ is even, the symbol of $\lambda^{'}=\lambda+ 1^{2b}$  has the following form
\begin{equation}\label{ae}
 \left(\ba{@{}c@{}c@{}c@{}c@{}c@{}c@{}c@{}c@{}c@{}} \cdots&& \cdots && \alpha_{m-b+1} &&\cdots && \alpha_m \\   & \cdots&& \beta_{m-b+1+t}+1 && \cdots && \beta_{m+t}+1 &\ea \right).
\end{equation}
\end{lemma}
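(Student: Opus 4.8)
The plan is to track the effect of the operation $\lambda \mapsto \lambda' = \lambda + 1^{2b}$ directly on the sequence $S = (l-k+\lambda_k)_{k=1}^{l}$ from which the symbol is built, rather than on the entries $\alpha_*,\beta_*$ themselves. Since $\lambda$ is a partition, $S$ is strictly decreasing in $k$ (consecutive differences are $1+\lambda_k-\lambda_{k+1}\ge 1$), so its first $2b$ entries are its $2b$ largest. Because $a>2b$ we have $\lambda_1=\cdots=\lambda_{2b}$, so these entries are $l-k+\lambda_1$ for $k=1,\dots,2b$, i.e.\ the $2b$ consecutive integers $l+\lambda_1-1,\,l+\lambda_1-2,\dots,\,l+\lambda_1-2b$. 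Among any $2b$ consecutive integers exactly $b$ are odd and $b$ are even, which is the same mechanism behind (\ref{SN}) and is consistent with the operation preserving the number of entries in each row.

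The operation $+1^{2b}$ adds $1$ to exactly these first $2b$ entries and leaves every entry with $k>2b$ untouched; the modified entries become the consecutive block $l+\lambda_1,\,l+\lambda_1-1,\dots,\,l+\lambda_1-2b+1$, whose minimum $l+\lambda_1-2b+1$ still strictly exceeds the $(2b+1)$-st entry of $S$ (which is at most $l+\lambda_1-2b-1$ since $\lambda_{2b+1}\le\lambda_1$). Hence the modified block remains the $2b$ largest entries and no re-ranking occurs across the boundary $k=2b$. Writing $P=l+\lambda_1$, I split on its parity. If $P$ is odd, the odd entries among the first $2b$ are those with $k$ even, namely $P-2,P-4,\dots,P-2b$, while after the shift the odd entries among the first $2b$ are those with $k$ odd, namely $P,P-2,\dots,P-2b+2$; thus the $b$ largest odd values increase by exactly $2$ entrywise. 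One checks directly that the even values among the first $2b$ form the set $\{P-1,P-3,\dots,P-2b+1\}$ both before and after, hence are unchanged. The roles of odd and even are exchanged when $P$ is even.

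It remains to translate this into $\alpha_*,\beta_*$. The odd values are the $2f_i+1$ and the even values the $2g_i$, with larger values carrying larger indices; since the values for $k>2b$ are untouched, all $f_i,g_i$ of smaller index are fixed, so only the top $b$ of one parity class move. When $P$ is odd, each of the $b$ largest odd values rises by $2$, i.e.\ $f_{m-b+1},\dots,f_m$ each increase by $1$, giving $\alpha_i\mapsto\alpha_i+1$ for $i=m-b+1,\dots,m$ while every $\beta_i$ is fixed; this is exactly (\ref{ao}). When $P$ is even, the top $b$ even values rise by $2$, so $g_{m+t-b+1},\dots,g_{m+t}$ and hence $\beta_{m-b+1+t},\dots,\beta_{m+t}$ each increase by $1$ with the $\alpha_i$ fixed, which is (\ref{ae}).

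The only genuinely delicate point is the index bookkeeping: one must ensure that adding $1$ shifts each affected value by exactly $2$ within its parity class and neither permutes the ordering nor pulls a new term into the top-$b$ block. This is guaranteed by the hypothesis $a>2b$ (so the first $2b$ parts are equal and the affected entries form a single consecutive run) together with the strict monotonicity of $S$; I expect this, rather than the parity computation, to be where care is needed. The argument is uniform across the $B_n$, $C_n$, $D_n$ theories, since by Definition \ref{Dn} they differ only in the initial appending of a $0$, which is already absorbed into the convention that $l$ counts the appended part and does not affect the first $2b$ entries.
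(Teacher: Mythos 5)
Your proposal is correct and follows essentially the same route as the paper: both proofs track the first $2b$ entries of the sequence $l-k+\lambda_k$, observe that they form a block of consecutive integers shifted up by $1$ under $\lambda\mapsto\lambda+1^{2b}$, and compare the odd and even parity classes before and after to conclude that exactly one of the families $f_*$ or $g_*$ (hence $\alpha_*$ or $\beta_*$) increases by $1$ in its top $b$ indices. Your explicit check that the modified block stays above the $(2b+1)$-st entry, so no re-ranking occurs, is a point the paper leaves implicit but is the same argument in substance.
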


  \begin{proof} Formula (\ref{sy}) is the result of Definition \ref{Dn}. For the first step of the  computation of symbol, we have the following sequence

   \begin{equation}\label{sy1}
    (l-1+\lambda_1,\cdots,l-2b+\lambda_{2b}, l-(2b+1)+\lambda_{2b+1},\cdots,0+\lambda_{l}).
  \end{equation}
Since $\lambda_1=\cdots=\lambda_{2b}$, the  first $2b$ terms of the sequence  (\ref{sy1}) are successive
\begin{equation}\label{sy2b}
  (l-1+\lambda_1,\cdots,l-2b+\lambda_{1}).
\end{equation}
According to Definition \ref{Dn}, all  even terms in the sequence (\ref{sy2b}) can be written in a decreasing sequence    as follows
\begin{equation*}
  (2g_{m+t},2g_{m+t-1},\cdots,2g_{m+t-b+1})
\end{equation*}
which are one to one correspondence with terms
$$(\beta_{m+t} ,\beta_{m+t-1} ,\cdots , \beta_{m+t-b+1}).$$

 The odd terms in the sequence (\ref{sy2b})  can be written in a decreasing sequence    as follows
\begin{equation*}
  (2f_{m}+1,2f_{m-2}+1,\cdots,2f_{m-b+1}+1)
\end{equation*}
which are one to one correspondence with terms
$$(\alpha_{m} ,\alpha_{m-1} ,\cdots , \alpha_{m-b+1}).$$
After adding an even row $1^{2b}$ to $\lambda$,  the sequence corresponding to (\ref{sy2b}) for the partition $\lambda^{'}=\lambda+1^{2b}$ is
\begin{equation}\label{sy2bn}
  (l-1+\lambda_1+1,\cdots,l-2b+\lambda_{1}+1).
\end{equation}

Let $g^{'}_{i}$ and  $f^{'}_{i}$  be the notations $g_i$ and  $f_i$ for the sequence (\ref{sy2bn}).
\begin{itemize}
  \item If $l+\lambda_1$ is odd, the  even terms in the sequence (\ref{sy2b})  can be rewritten as follows
\begin{equation}\label{o-even}
  (l-1+\lambda_1,l-3+\lambda_1,\cdots,l-(2b-1)+\lambda_{1})=(2g_{m+t},2g_{m+t-1},\cdots,2g_{m+t-b}).
\end{equation}
While the even terms in the sequence (\ref{sy2bn})  are
\begin{equation}\label{o-evenn}
  (l-1+\lambda_1,l-3+\lambda_1,\cdots,l-(2b-1)+\lambda_{1})=(2g^{'}_{m+t},2g^{'}_{m+t-1},\cdots,2g^{'}_{m+t-b}).
\end{equation}
Because of   no differences between   the  sequences (\ref{o-evenn}) and (\ref{o-even}), we have
\begin{equation}\label{lll1}
  (\beta^{'}_{m+t} ,\beta^{'}_{m+t-1} ,\cdots , \beta^{'}_{m+t-b})=(\beta_{m+t} ,\beta_{m+t-1} ,\cdots , \beta_{m+t-b}).
\end{equation}
The odd terms in the sequence (\ref{sy2b}) can be rewritten  as follows
\begin{equation}\label{o-old}
  (l-2+\lambda_1,l-4+\lambda_1,\cdots,l-(2b)+\lambda_{1})=(2f_{m}+1,2f_{m-2}+1,\cdots,2f_{m-b}+1)
\end{equation}
While the odd terms in the sequence (\ref{sy2bn})  are
\begin{equation}\label{o-oldn}
  (l-1+\lambda_1+1,l-3+\lambda_1+1,\cdots,l-(2b-1)+\lambda_{1}+1)=(2f^{'}_{m}+1,2f^{'}_{m-2}+1,\cdots,2f^{'}_{m-b}+1)
\end{equation}
Comparing the sequences (\ref{o-oldn}) with (\ref{o-old}), using $\alpha^{'}_i=f^{'}_i-i+1=(f_i+1)-i+1$, we have
\begin{equation}\label{lll2}
(\alpha^{'}_{m} ,\alpha^{'}_{m-1} ,\cdots , \alpha^{'}_{m-b})=(\alpha_{m}+1 ,\alpha_{m-1}+1 ,\cdots , \alpha_{m-b}+1).
\end{equation}
Combing  formulas (\ref{lll1}) and  (\ref{lll2}), we have formula(\ref{ao}).

\item If $l+\lambda_1$ is even, the   even terms in the sequence (\ref{sy2b}) of the  partition $\lambda$  can be rewritten as follows
\begin{equation}\label{e-even}
   (l-2+\lambda_1,l-4+\lambda_1,\cdots,l-(2b)+\lambda_{1})=(2g_{m+t},2g_{m+t-1},\cdots,2g_{m+t-b}).
\end{equation}

While the even terms in the sequence (\ref{sy2bn})  of  $\lambda^{'}$ are
\begin{equation}\label{e-evenn}
  (l-1+\lambda_1+1,l-3+\lambda_1+1,\cdots,l-(2b-1)+\lambda_{1}+1)=(2g^{'}_{m+t},2g^{'}_{m+t-1},\cdots,2g^{'}_{m+t-b}).
\end{equation}
Comparing  sequences (\ref{e-even}) with (\ref{e-evenn}),  using $\beta^{'}_i=g^{'}_i-i+1=(g_i+1)-i+1$, we have
\begin{equation}\label{bbss}
  (\beta^{'}_{m+t} ,\beta^{'}_{m+t-1} ,\cdots , \beta^{'}_{m+t-b})=(\beta_{m+t}+1 ,\beta_{m+t-1}+1 ,\cdots , \beta_{m+t-b}+1).
\end{equation}

These odd terms in the sequence (\ref{sy2b}) of  $\lambda$ are
\begin{equation}\label{e-old}
 (l-1+\lambda_1,l-3+\lambda_1,\cdots,l-(2b-1)+\lambda_{1})=(2f_{m}+1,2f_{m-2}+1,\cdots,2f_{m-b}+1)
\end{equation}
While odd terms in the  sequence (\ref{sy2bn}) of $\lambda^{'}$ are
\begin{equation}\label{e-oldn}
  (l-1+\lambda_1,l-3+\lambda_1,\cdots,l-(2b-1)+\lambda_{1})=(2f^{'}_{m}+1,2f^{'}_{m-2}+1,\cdots,2f^{'}_{m-b}+1)
\end{equation}
 Since nothings are changed in the sequence (\ref{e-oldn}) compared  with  (\ref{e-old}),  we have
$$(\alpha^{'}_{m} ,\alpha^{'}_{m-1} ,\cdots , \alpha^{'}_{m-b})=(\alpha_{m},\alpha_{m-1},\cdots , \alpha_{m-b}).$$
Combing  formulas (\ref{bbss}) and  (\ref{e-oldn}), we have  formula (\ref{ae}).
\end{itemize}
\end{proof}

\begin{rmk}
 The  entries in the same  row of the symbol are increased by one   from right to left. And the number of the changed entries  is half length  of the even row added to the partition.
\end{rmk}

\begin{lemma}\label{Lsysy}
  For a partition $\lambda (\lambda_1=\cdots=\lambda_a>\cdots)$, the last $b,(a>2b),$ terms in the top and bottom  rows of   the symbol  has the following form
\begin{equation}\label{sysy}
 \left(\ba{@{}c@{}c@{}c@{}c@{}c@{}c@{}c@{}c@{}c@{}} \cdots&& \cdots && \alpha_m &&\cdots && \alpha_m \\   & \cdots&& \beta_{m+t} && \cdots && \beta_{m+t} &\ea \right),
 \end{equation}
 where $t=-1$  for the  $B_n$ theory , $t=0$ for the  $C_n$ theory, and $t=1$ for the $D_n$ theory.
 $l$ is the length of the partition  including the extra  0 appended   if needed. If $l+\lambda_1$ is odd, $\beta_{m+t}=\alpha_m-t+1.$
    If $l+\lambda_1$ is even, $\beta_{m+t}=\alpha_m-t.$

\end{lemma}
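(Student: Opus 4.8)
The plan is to exploit the fact that the top $a$ equal rows produce, in the first step of Definition \ref{Dn}, a block of $a$ \emph{consecutive} integers which are moreover the largest entries of the whole sequence $S$ of (\ref{S}). Writing $l$ for the (possibly augmented) length as in Definition \ref{Dn}, the rows $\lambda_1=\cdots=\lambda_a$ contribute the values $l-1+\lambda_1,\ l-2+\lambda_1,\ \ldots,\ l-a+\lambda_1$, which descend by $1$ and hence form $a$ consecutive integers. Since for $k>a$ one has $\lambda_k\le\lambda_1-1$ and $l-k\le l-a-1$, every later entry of $S$ is strictly smaller than $l-a+\lambda_1$; thus this block consists of the $a$ largest terms of $S$, and in particular (as $a\ge2$) it contains both the largest even and the largest odd value occurring anywhere in $S$.

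First I would establish the displayed form (\ref{sysy}). Among the block, the odd terms are consecutive odd integers, so the associated $f$-values are consecutive integers; if $2f_m+1>2f_{m-1}+1>\cdots$ are these odd terms, then $f_{m-j}=f_m-j$, whence $\alpha_{m-j}=f_{m-j}-(m-j)+1=f_m-m+1=\alpha_m$ for every relevant $j$. The same argument applied to the even terms of the block gives $\beta_{m+t-j}=\beta_{m+t}$. The hypothesis $a>2b$ guarantees that among $a$ consecutive integers there are at least $b$ of each parity, so the last $b$ entries of each row of the symbol all come from this block and are constant, which is exactly (\ref{sysy}).

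It then remains to compare $\alpha_m=f_m-m+1$ with $\beta_{m+t}=g_{m+t}-(m+t)+1$, giving
\begin{equation*}
\beta_{m+t}-\alpha_m=g_{m+t}-f_m-t .
\end{equation*}
Here $2g_{m+t}$ is the largest even value of $S$ and $2f_m+1$ the largest odd value, both realised inside the block. If $l+\lambda_1$ is odd, then $l-1+\lambda_1$ is even, so it is the global maximum and $2g_{m+t}=l-1+\lambda_1$, while the next term $2f_m+1=l-2+\lambda_1$; hence $g_{m+t}-f_m=1$ and $\beta_{m+t}=\alpha_m-t+1$. If $l+\lambda_1$ is even, then $l-1+\lambda_1$ is odd, so $2f_m+1=l-1+\lambda_1$ and $2g_{m+t}=l-2+\lambda_1$; hence $g_{m+t}-f_m=0$ and $\beta_{m+t}=\alpha_m-t$. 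This yields both cases.

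The bookkeeping is the only delicate point: one must keep straight which of $f_m$ and $g_{m+t}$ indexes the largest odd resp.\ even term (this flips with the parity of $l+\lambda_1$), and carry the $-t$ shift coming from the fact that the bottom row holds $m+t$ entries rather than $m$. No genuine estimate is involved; once the block is identified as the consecutive run of largest terms, everything reduces to the elementary parity split above.
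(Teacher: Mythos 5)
Your proposal is correct and follows essentially the same route as the paper's proof: identify the block of consecutive integers $l-1+\lambda_1,\dots$ produced by the equal top rows as the largest entries of $S$, deduce constancy of the last $b$ entries of each row from the fact that consecutive odd (resp.\ even) terms make the $f$'s (resp.\ $g$'s) step by one while the index does too, and then read off $g_{m+t}-f_m\in\{0,1\}$ from the parity of the global maximum $l-1+\lambda_1$. Your explicit check that all later entries of $S$ are strictly smaller than the block is a small point the paper leaves implicit, but the argument is otherwise the same.
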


\begin{proof} For the first step of the  computation of   symbol, we have the following sequence
   \begin{equation}\label{sysy1}
    (l-1+\lambda_1,\cdots,l-2b+\lambda_{2b}, l-(2b+1)+\lambda_{2b+1},\cdots,0+\lambda_{l}).
  \end{equation}
Since $\lambda_1=\cdots=\lambda_{2b}$, the  first $2b$ terms of the sequence  (\ref{sysy1}) are successive
\begin{equation}\label{sysy2b}
  (l-1+\lambda_1,\cdots,l-2b+\lambda_{1}).
\end{equation}
According to Definition \ref{Dn}, the even terms in the  sequence (\ref{sysy2b})  can be rewritten  as
\begin{equation*}
  (2g_{m+t},2g_{m+t-1},\cdots,2g_{m+t-b+1})
\end{equation*}
which  are one to one correspondence with terms
$$(\beta_{m+t} ,\beta_{m+t-1} ,\cdots , \beta_{m+t-b+1}).$$
Since $g_i=g_{i-1}+1$ for $i\geq m+t-b$, we have
$$\beta_i=\beta_{i-1},\quad i\geq m+t-b.$$
The odd terms in the sequence (\ref{sysy2b})  can be written as
\begin{equation*}
  (2f_{m}+1,2f_{m-2}+1,\cdots,2f_{m-b+1}+1)
\end{equation*}
which are one to one correspondence with terms
$$(\alpha_{m} ,\alpha_{m-1} ,\cdots , \alpha_{m-b+1}).$$
Since $f_i=f_{i-1}$ for $i\geq m-b$, we have
$$\alpha_i=\alpha_{i-1}, \quad i\geq m-b. $$

\begin{itemize}
    \item If $l+\lambda_1$ is odd, so the largest number  $l-1+\lambda_1$  in the  sequence (\ref{sysy1})  is even, thus corresponding to $\beta_{m+t}$. We have $g_{m+t}=f_m+1$
        which mean $$\beta_{m+t}=g_{m+t}-(m+t)+1=f_m+1-(m+t)+1=f_m-m+1-t+1=\alpha_m-t+1.$$
    \item If $l+\lambda_1$ is even, so the largest number  $l-1+\lambda_1$  in the sequence (\ref{sysy1}) is odd, thus corresponding to $\alpha_{m}$. We have $g_{m+t}=f_m$
        which mean $$\beta_{m+t}=g_{m+t}-(m+t)+1=f_m-(m+t)+1=f_m-m+1-t=\alpha_m-t.$$
  \end{itemize}
\end{proof}

Next, we discuss what happen to the symbol of a partition when added an odd row. We can reduce the proof to the case of Lemma \ref{Lsy} by introducing an virtual column before the first part of the partition. We find that it is more natural to consider the contributions of two odd rows together.
\begin{lemma}\label{Lo}
For a partition $\lambda(\lambda_1=\cdots=\lambda_a>\cdots)$, $(a>2b+1)$,  the symbol  has the following form
\begin{equation}\label{oL1}
\left(\ba{@{}c@{}c@{}c@{}c@{}c@{}c@{}c@{}c@{}c@{}c@{}c@{}c@{}c@{}} &&  \cdots&& \alpha_{m-b} && \alpha_{m-b+1} &&\cdots && \alpha_{m-1}&& \alpha_m \\    & \cdots&& \beta_{m+t-b}&& \beta_{m+t-b+1} && \cdots &&\beta_{m+t-1} && \beta_{m+t} &\ea \right).
\end{equation}
 $l$ is the length of the partition  including the extra  0 appended  if necessary.
If $l+\lambda_1$ is odd,  the symbol of the partition $\lambda^{'}=\lambda+1^{2b+1}$ is
\begin{equation}\label{oL2}
\left(\ba{@{}c@{}c@{}c@{}c@{}c@{}c@{}c@{}c@{}c@{}c@{}c@{}c@{}c@{}c@{}c@{}} &&  \cdots&& \alpha_{m-b} && \alpha_{m-b+1} &&\cdots && \alpha_{m-1}&&\alpha_m && \alpha_{m}\\    & \cdots&& \beta_{m+t-b}&&\beta_{m+t-b+1}+1 && \cdots &&\beta_{m+t-1}+1 &&\quad\quad && \quad\quad &\ea \right).
\end{equation}
If $l+\lambda_1$ is even,  the symbol of the partition $\lambda^{'}=\lambda+1^{2b+1}$ is
\begin{equation}\label{oL3}
\left(\ba{@{}c@{}c@{}c@{}c@{}c@{}c@{}c@{}c@{}c@{}c@{}c@{}c@{}c@{}c@{}c@{}} &&  \cdots&& \alpha_{m-b}+1 && \alpha_{m-b+1}+1 &&\cdots && \alpha_{m-1}+1&& \,\,\quad && \,\,\quad\\    & \cdots&& \beta_{m+t-b}&&\beta_{m+t-b+1} && \cdots &&\beta_{m+t-1} && \beta_{m+t}&& \beta_{m+t}  &\ea \right).
\end{equation}
\end{lemma}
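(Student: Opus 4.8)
The plan is to reduce the addition of the odd row $1^{2b+1}$ to the already-settled even case of Lemma \ref{Lsy}, using the virtual-column device of Lemma \ref{Lem-v}. The whole argument turns on the single elementary identity
\[
\big(\lambda_1\oplus\lambda\big)+1^{2b+2}=(\lambda_1+1)\oplus\big(\lambda+1^{2b+1}\big),
\]
which holds because prepending the part $\lambda_1$ and then raising the first $2b+2$ rows by one produces the same partition as first raising the top $2b+1$ rows of $\lambda$ and then prepending the part $\lambda_1+1$. Writing $\hat\lambda=\lambda_1\oplus\lambda$, the hypothesis $a>2b+1$ gives $a+1>2b+2$ equal leading rows in $\hat\lambda$, so Lemma \ref{Lsy} is applicable to $\hat\lambda$ with the even row $1^{2b+2}$.

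First I would prepend the virtual column. By Lemma \ref{Lem-v} (formula (\ref{l0-v})), the symbol of $\hat\lambda$ is exactly the symbol (\ref{oL1}) of $\lambda$ with one extra entry glued to the right-hand end: the entry lands on the top row when the new largest sequence term $l+\lambda_1$ is odd and on the bottom row when it is even. The crucial observation is that $\hat\lambda$ has length $l+1$, so the parity that governs Lemma \ref{Lsy} for $\hat\lambda$ is that of $(l+1)+\lambda_1$, the \emph{opposite} of the parity of $l+\lambda_1$ controlling $\lambda$. This single parity flip is what makes an odd-row addition act on the row opposite to the one an even-row addition touches, and it is the source of the dichotomy between (\ref{oL2}) and (\ref{oL3}).

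Next I would apply Lemma \ref{Lsy} to $\hat\lambda$: when $l+\lambda_1$ is odd the even case (\ref{ae}) raises the last $b+1$ bottom entries of $\hat\lambda$'s symbol by one (the top, including the glued entry, being unchanged), while when $l+\lambda_1$ is even the odd case (\ref{ao}) raises the last $b+1$ top entries. Finally I would invoke the identity above, which recognises $\hat\lambda+1^{2b+2}$ as $(\lambda_1+1)\oplus(\lambda+1^{2b+1})$, i.e.\ as $\lambda+1^{2b+1}$ carrying a prepended virtual column of value $\lambda_1+1$. Stripping this column off — running Lemma \ref{Lem-v} backwards — deletes the one boundary entry coming from the largest term $l+\lambda_1+1$; since this parity is again opposite to $l+\lambda_1$, the deleted entry sits on the row opposite to the one enlarged in the first step. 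Reading off what survives yields (\ref{oL2}) when $l+\lambda_1$ is odd and (\ref{oL3}) when it is even.

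I expect the main obstacle to be the index bookkeeping around the glue-and-strip steps: prepending and then deleting a boundary entry relabels every entry of the affected row, so one must verify that the net relabelling reproduces the displayed indices rather than an off-by-one shifted version. This is precisely where Lemma \ref{Lsysy} is indispensable: inside the block $\lambda_1=\cdots=\lambda_a$ the leading $\alpha$'s and $\beta$'s are locally constant ($\alpha_i=\alpha_{i-1}$ and $\beta_i=\beta_{i-1}$ there), so the two a priori different relabellings $\beta_j\mapsto\beta_j+1$ and $\beta_j\mapsto\beta_{j+1}+1$ coincide as numerical sequences, and likewise on the top row. Tracking this, together with the fact that the total count changes by exactly one on each row, is all that is needed to match the right-adjusted displays (\ref{oL2}) and (\ref{oL3}), whose trailing blanks merely record the one-entry length difference between the two rows.
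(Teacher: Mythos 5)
Your proposal is correct and follows essentially the same route as the paper: prepend a virtual part $\lambda_0=\lambda_1$ (Lemma \ref{Lem-v}), exploit the resulting parity flip of $l+\lambda_1$ to apply the even-row Lemma \ref{Lsy} to $1^{2b+2}$, use Lemma \ref{Lsysy} to identify the boundary entries, and finally strip off the single auxiliary entry coming from the largest term $l+\lambda_1+1$. Your explicit identity $(\lambda_1\oplus\lambda)+1^{2b+2}=(\lambda_1+1)\oplus(\lambda+1^{2b+1})$ is just a cleaner packaging of the paper's sequence (\ref{osy2}) with its underlined auxiliary term, so no substantive difference remains.
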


  \begin{proof} To use Lemma \ref{Lsy},  we append a virtual row $\lambda_0 (=\lambda_1 )$  before the first part $\lambda_1$, thus forming a new partition  $\lambda^{V}(\lambda_0=\lambda_1=\cdots=\lambda_a>\cdots)$ with length of $l+1$.
   For the first step of   computation of symbol, we have the following sequence
   \begin{equation}\label{osy1}
    (\underline{l+1-1+\lambda_0},l-1+\lambda_1,\cdots,l-2b+\lambda_{2b}, l-(2b+1)+\lambda_{2b+1},\cdots,0+\lambda_{l}).
  \end{equation}
The underline   indicate that   it is an auxiliary term.
After adding a row $1^{2b+2}$ to the partition,  we have the following sequence
\begin{equation}\label{osy2}
    (\underline{l+1-1+\lambda_0+1},l-1+\lambda_1+1,\cdots,l-2b+\lambda_{2b}+1+1, l-(2b+1)+\lambda_{2b+1}+1,\cdots,0+\lambda_{l}).
  \end{equation}

\begin{itemize}
  \item If $l+\lambda_1$  is odd,   $l+1-1+\lambda_0$ in the sequence (\ref{osy1})  corresponds to $\alpha_{m+1}$.
Then the  symbol of the partition $\lambda^{V}$ is
  \begin{equation}\label{oL2bo1}
\left(\ba{@{}c@{}c@{}c@{}c@{}c@{}c@{}c@{}c@{}c@{}c@{}c@{}c@{}c@{}c@{}c@{}} &&  \cdots&& \alpha_{m-b} && \alpha_{m-b+1}&&\cdots && \alpha_{m-1}&& \alpha_m && \alpha_{m+1}\\    & \cdots&& \beta_{m+t-b}&&\beta_{m+t-b+1} && \cdots &&\beta_{m+t-1} && \beta_{m+t}&& \quad\quad &\ea \right).
\end{equation}
According to Lemma \ref{Lsysy}, we have $\alpha_{m}=\alpha_{m+1}$.
   For the partition $\lambda^{V}+1^{2b+2}$,  $l+1+\lambda_0$ is even.   Using Lemma \ref{Lsy}, we have
 \begin{equation}\label{oL2bo2}
\left(\ba{@{}c@{}c@{}c@{}c@{}c@{}c@{}c@{}c@{}c@{}c@{}c@{}c@{}c@{}c@{}c@{}} &&  \cdots&& \alpha_{m-b} && \alpha_{m-b+1} &&\cdots && \alpha_{m-1}&&\alpha_m && \alpha_{m}\\    & \cdots&& \beta_{m+t-b}&&\beta_{m+t-b+1}+1 && \cdots &&\beta_{m+t-1}+1 &&\beta_{m+t}+1&& \quad\quad &\ea \right).
\end{equation}
Since the auxiliary term $\underline{l-0+\lambda_0+1}$ in the sequence (\ref{osy2}) is even, it corresponds to the last one on the bottom row of the symbol which is  $\beta_{m+t}+1$. After omitting  this term, the symbol of $\lambda^{'}=\lambda+1^{2b+1}$ is
  \begin{equation}\label{oL2bo3}
\left(\ba{@{}c@{}c@{}c@{}c@{}c@{}c@{}c@{}c@{}c@{}c@{}c@{}c@{}c@{}c@{}c@{}} &&  \cdots&& \alpha_{m-b} && \alpha_{m-b+1} &&\cdots && \alpha_{m-1}&&\alpha_m && \alpha_{m}\\    & \cdots&& \beta_{m+t-b}&&\beta_{m+t-b+1}+1 && \cdots &&\beta_{m+t-1}+1 &&\quad\quad&& \quad\quad &\ea \right)
\end{equation}
which is formula(\ref{oL2}).

\item $l+\lambda_1$  is even,   $l+1-1+\lambda_0$ in the sequence (\ref{osy1})    corresponds to $\beta_{m+t+1}$.
Then the  symbol of the partition $\lambda^{V}$ is
  \begin{equation*}
\left(\ba{@{}c@{}c@{}c@{}c@{}c@{}c@{}c@{}c@{}c@{}c@{}c@{}c@{}c@{}c@{}c@{}} &&  \cdots&& \alpha_{m-b} && \alpha_{m-b+1}&&\cdots && \alpha_{m-1}&& \alpha_m && \quad\quad\\    & \cdots&& \beta_{m+t-b}&&\beta_{m+t-b+1} && \cdots &&\beta_{m+t-1} && \beta_{m+t}&& \beta_{m+t+1} &\ea \right).
\end{equation*}
According to Lemma \ref{Lsysy}, we have $\beta_{m+t}=\beta_{m+t+1}$.
For the partition $\lambda^{V}+1^{2b+2}$,  $l+1+\lambda_0$ is odd.   Using Lemma \ref{Lsy}, we have
 \begin{equation}\label{oL2be1}
\left(\ba{@{}c@{}c@{}c@{}c@{}c@{}c@{}c@{}c@{}c@{}c@{}c@{}c@{}c@{}c@{}c@{}} &&  \cdots&& \alpha_{m-b}+1 && \alpha_{m-b+1}+1&&\cdots && \alpha_{m-1}+1&& \alpha_m+1 && \quad\quad\\    & \cdots&& \beta_{m+t-b}&&\beta_{m+t-b+1} && \cdots &&\beta_{m+t-1} && \beta_{m+t}&& \beta_{m+t} &\ea \right).
\end{equation}
Since the auxiliary term $\underline{l-0+\lambda_0+1}$ in the sequence (\ref{osy2}) is odd, it corresponds to $\alpha_{m}$ which should be omitted in the end. So the symbol of $\lambda^{'}=\lambda+1^{2b+1}$ is
  \begin{equation}\label{oL2be3}
\left(\ba{@{}c@{}c@{}c@{}c@{}c@{}c@{}c@{}c@{}c@{}c@{}c@{}c@{}c@{}c@{}c@{}} &&  \cdots&& \alpha_{m-b}+1 && \alpha_{m-b+1}+1&&\cdots && \alpha_{m-1}+1&& \quad\quad && \quad\quad\\    & \cdots&& \beta_{m+t-b}&&\beta_{m+t-b+1} && \cdots &&\beta_{m+t-1} && \beta_{m+t}&& \beta_{m+t} &\ea \right)
\end{equation}
which is  formula(\ref{oL3}).
\end{itemize}
\end{proof}
\begin{rmk}
                Note that we omit the entries  in the symbols (\ref{oL2bo2}), (\ref{oL2be1}),  which  correspond  to the auxiliary term in the sequence.
\end{rmk}

\subsection{Symbol of rigid partition in the $B_n$ theory}
The following fact \cite{Wy09} is useful  for  studying   the structure of symbol, so we give the proof in detail.
\begin{Pro}{\label{Pb}}
The longest row in a rigid  $B_n$ partition always contains an odd number of boxes. And the following two rows of the first row are either both of odd length or both of even length.  This pairwise pattern then continues. If the Young tableau has an even number of rows, the row of shortest length has to be even.
\end{Pro}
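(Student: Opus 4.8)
The plan is to translate every assertion about the parities of the rows into an assertion about the multiplicities. Write $\lambda=m^{n_m}(m-1)^{n_{m-1}}\cdots 1^{n_1}$; the no-gap condition says that the values $k$ with $n_k>0$ form a contiguous range topped by $k=m$, orthogonality forces $n_k$ to be even whenever $k$ is even, and rigidity forbids $n_k=2$ when $k$ is odd. Together with $\sum_k k\,n_k=2n+1$ these are the only inputs. Since the row value is constant on each block of equal parts and flips by one exactly at a descent between consecutive blocks, the whole proposition is really a statement about the row indices at which descents occur: ``$\lambda_{2i}\equiv\lambda_{2i+1}\ (\mathrm{mod}\ 2)$'' means no descent sits between rows $2i$ and $2i+1$, i.e. every descent falls on an odd-to-even boundary.

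First I would settle the length, which is what the footnote to Lemma \ref{Lb} anticipates. Reducing $\sum_k k\,n_k=2n+1$ modulo $2$, the even-$k$ terms drop out because $n_k$ is even there, so $\sum_{k\ \text{odd}}n_k$ is odd; as $\sum_{k\ \text{even}}n_k$ is even, $l=\sum_k n_k$ is odd. This immediately makes the final sentence of the proposition (an even number of rows) the degenerate case, which I would record as such rather than prove substantively.

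For the longest row and the pairwise pattern I would argue with the shifted sequence $s_k=\lambda_k+(l-k)$ used throughout the paper. Because $s_k-s_{k+1}=(\lambda_k-\lambda_{k+1})+1$, the $s_k$ run through consecutive integers inside a block, so their parities alternate, and they jump by $2$ across a descent, so parity is preserved there. Matching this against Definition \ref{Dn} — odd $s_k$ feed the $f$'s and hence the $\alpha$'s, even $s_k$ feed the $g$'s and hence the $\beta$'s — lets me read off how each block moves the running index-parity: an even-valued block, being of even multiplicity by orthogonality, contributes an even shift and preserves it, while an odd-valued block contributes a shift equal to its own multiplicity. The intended mechanism is that, after grouping the odd-valued blocks in consecutive pairs, these shifts are forced to be even, which is exactly what would pin the parity of $\lambda_1$, force the descents onto the odd-to-even boundaries, and fix the parity of the shortest row.

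The main obstacle is precisely this last bookkeeping step, namely controlling the parity contributed by a block of \emph{odd} value. Orthogonality constrains only the even-valued blocks; an odd-valued block is merely denied the single multiplicity $2$, so a priori it can still carry either parity, and this is where the delicate part of the argument lives. I would attempt to push it through by an induction that peels off the top pair of blocks and, at each stage, uses Lemma \ref{Lsysy} to compare $\alpha_m$ with $\beta_{m+t}$ according to the parity of $l+\lambda_1$, thereby pinning down where the first descent lands before passing to the remaining partition. I expect the bulk of the work — and the point most in need of care — to be showing that the admissible multiplicities of the paired odd-valued blocks really do combine to an even shift in every case.
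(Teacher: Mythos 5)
Your computation that the length $l$ is odd is correct and is exactly the paper's first step, but the rest of the proposal addresses a different statement from the one the proposition makes, because you have misread what a ``row'' is in this paper. Here a row of a partition is a part of the transpose $\lambda^t$: this is why ``adding the row $1^{2b}$'' produces $\lambda+1^{2b}$, and why the tables assign the $i$th row the length $\sum_{k=i}^{m}n_k$. Consequently the longest row has $l$ boxes (so your parity argument already finishes that claim), the $i$th and $(i{+}1)$th rows differ by exactly $n_i$, the number of rows of the tableau is $\lambda_1=m$, and the shortest row is $n_m$. Under your reading, where the rows are the parts $\lambda_j$ themselves, the proposition is simply false: $2^2\,1$ is a rigid $B_2$ partition whose largest part is even and for which $\lambda_2=2\not\equiv\lambda_3=1 \pmod 2$; so the bookkeeping you defer to the end (``showing that the admissible multiplicities of the paired odd-valued blocks really do combine to an even shift'') cannot be completed, since nothing forces $\lambda_1$ to be odd or the descents onto odd-to-even boundaries. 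For the same reason your claim that the final sentence is degenerate is wrong: the number of rows is $m$, not $l$, and $m$ can be even (again $2^2\,1$).

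With the correct reading no symbol machinery is needed, and the paper's proof is a short multiplicity count of exactly the kind you set up in your first paragraph: $l$ is odd as you showed, which is the first assertion; rows $2i$ and $2i{+}1$ differ by $n_{2i}$, which is even by orthogonality, so each such pair has a common parity and contributes an even number of boxes, which is the pairwise pattern; and if $m$ is even, subtracting the even pair totals and the odd first row from the odd total $2n{+}1$ forces the shortest row $n_m$ to be even (equivalently, $n_m$ is even directly by orthogonality because $m$ is even). Invoking Definition \ref{Dn} and Lemma \ref{Lsysy} is in any case much heavier than necessary and risks circularity, since Lemma \ref{Lb} already leans on the length statement of this very proposition.
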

\begin{proof}  Even integers appear an even number of times  for a partition in the $B_n$ theory.  So the sum of all odd integers is odd, which  implies that the  number of odd integers is odd.  And the length of the partition which is the sum of the number of odd integers and  even integers  is odd. So the longest row   contains an odd number of boxes.

If the following two rows of the first row are of different parities, then the difference of the number of boxes between these two rows is odd. It imply that part  2 appears odd number of times  in the partition, which is a  contradiction. So  the following two rows are either both of odd length or both of even length. In the same way, we can prove the   next two rows are of  the same parities. This pairwise rows continues.

The number of total  boxes  of a pairwise rows is even. If the Young tableau has an even number of rows, then the number of the total boxes of  the first row and shortest  row is odd.  Since the longest row  contains an odd number of boxes, the first row is even.
\end{proof}
\begin{rmk}
 If the last row is odd, the number of rows of the partition is odd.
\end{rmk}

Using the above characteristics of the partitions in the $B_n$ theory,  we can refine  Lemma \ref{Lsysy} as follows.
\begin{lemma}\label{Lbe}
Let  $\lambda (\lambda_1=\cdots=\lambda_a>\cdots> \lambda_{l})$ is a partition in the $B_n$ theory and  the last two rows have the  same parity.
If $a>2b$,  the symbol of $\lambda$  has the following form
\begin{equation}\label{Lb1}
 \left(\ba{@{}c@{}c@{}c@{}c@{}c@{}c@{}c@{}c@{}c@{}} \cdots&& \cdots && \alpha_{m-b+1} &&\cdots && \alpha_m \\   & \cdots&& \beta_{m-b} && \cdots && \beta_{m-1} &\ea \right)=\left(\ba{@{}c@{}c@{}c@{}c@{}c@{}c@{}c@{}c@{}c@{}} \cdots&& \cdots && \alpha_{m} &&\cdots && \alpha_m \\   & \cdots&& \alpha_{m} +1 && \cdots && \alpha_m +1 &\ea \right).
\end{equation}
The symbol of the partition $\lambda^{'}=\lambda+1^{2b}$ is
\begin{equation}\label{Lb2}
\left(\ba{@{}c@{}c@{}c@{}c@{}c@{}c@{}c@{}c@{}c@{}} \cdots&& \cdots && \alpha_{m-b+1} &&\cdots && \alpha_m \\   & \cdots&& \beta_{m-b}+1 && \cdots && \beta_{m-1}+1 &\ea \right).
\end{equation}
The symbol of the partition $\lambda^{''}=\lambda^{'}+1^{2c}$, $b>c$, is
\begin{equation}\label{Lb3}
  \left(\ba{@{}c@{}c@{}c@{}c@{}c@{}c@{}c@{}c@{}c@{}c@{}c@{}c@{}c@{}} \cdots&& \cdots && \alpha_{m-b+1} &&\cdots && \alpha_{m-c+1}+1 &&  \cdots&& \alpha_m +1\\   & \cdots&& \alpha_{m-b} +1 &&\cdots && \alpha_{m-b+1} +1 &&  \cdots && \beta_{m-1} +1 &\ea \right).
\end{equation}
\end{lemma}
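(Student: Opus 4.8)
The plan is to obtain all three displays by chaining Lemmas~\ref{Lsy} and~\ref{Lsysy}, the one genuinely new ingredient being a parity bookkeeping supplied by Proposition~\ref{Pb}. Throughout, the decisive quantity is the parity of $l+\lambda_1$, which (via Lemma~\ref{Lsy}) determines whether an added even row pushes its increment onto the top or the bottom row of the symbol. For~(\ref{Lb1}), I would first record that a rigid $B_n$ partition has $\lambda_1$ odd and $l$ odd by Proposition~\ref{Pb}, the hypothesis that the last two rows share a parity being consistent with the pairwise pattern that forces $l$ odd, so that $l+\lambda_1$ is even. Feeding this into Lemma~\ref{Lsysy} in its ``$l+\lambda_1$ even'' branch gives $\beta_{m+t}=\alpha_m-t$, i.e. $\beta_{m-1}=\alpha_m+1$ for $t=-1$. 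The flatness relations proved inside Lemma~\ref{Lsysy}, namely $\alpha_i=\alpha_{i-1}$ for $i\ge m-b$ and $\beta_i=\beta_{i-1}$ for $i\ge m-1-b$ (available because $a>2b$), then collapse the displayed entries to $\alpha_{m-b+1}=\cdots=\alpha_m$ and $\beta_{m-b}=\cdots=\beta_{m-1}=\alpha_m+1$, which is the right-hand equality in~(\ref{Lb1}).

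For~(\ref{Lb2}) I apply Lemma~\ref{Lsy} to the passage $\lambda\mapsto\lambda+1^{2b}$. Since the relevant parity is that of $\lambda$ itself and $l+\lambda_1$ is even, the even branch~(\ref{ae}) applies verbatim: the top row is untouched and $\beta_{m-b},\dots,\beta_{m-1}$ each rise by one. This is exactly~(\ref{Lb2}), and no further argument is required.

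For~(\ref{Lb3}) the key point is that the parity flips. Adding $1^{2b}$ keeps the length ($l'=l$ odd) but raises the longest row by one ($\lambda'_1=\lambda_1+1$, now even), so $l'+\lambda'_1$ is odd; moreover $\lambda'$ now carries a flat block of width exactly $2b$, and since $b>c$ we have $2b>2c$, so the hypothesis of Lemma~\ref{Lsy} is met with room to spare for the passage $\lambda'\mapsto\lambda'+1^{2c}$. Its odd branch~(\ref{ao}) leaves the bottom row alone, so that row is inherited unchanged from~(\ref{Lb2}), still reading $\beta_{m-b}+1,\dots,\beta_{m-1}+1$, while the last $c$ top entries $\alpha_{m-c+1},\dots,\alpha_m$ each increase by one and the block $\alpha_{m-b+1},\dots,\alpha_{m-c}$ stays fixed. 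Assembling the two pieces and substituting the equalities $\alpha_{m-b+1}=\cdots=\alpha_m$ and $\beta_j=\alpha_m+1$ recorded in~(\ref{Lb1}) to rewrite the constant blocks then presents the result in the form~(\ref{Lb3}).

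The only real obstacle is the bookkeeping, not any new idea: one must (i) track the parity of $l+\lambda_1$, which flips at the intermediate stage and thereby switches the active branch of Lemma~\ref{Lsy} from the bottom row to the top row, and (ii) follow the successive shrinking of the flat block $a\to 2b\to 2c$, which is precisely what keeps the hypothesis $a>2b$ (respectively $2b>2c$) of Lemma~\ref{Lsy} valid at each stage and pins down exactly which entries move. Once these two points are handled carefully, the passage to the stated presentation of~(\ref{Lb3}) is a routine substitution of the identities from~(\ref{Lb1}).
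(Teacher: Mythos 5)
Your proof is correct and follows essentially the same route as the paper: (\ref{Lb1}) from Lemma \ref{Lsysy} in its $l+\lambda_1$ even branch (using Proposition \ref{Pb} to get $\lambda_1$ and $l$ both odd), (\ref{Lb2}) from the even branch (\ref{ae}) of Lemma \ref{Lsy}, and (\ref{Lb3}) from the odd branch (\ref{ao}) applied to $\lambda'$ after the parity of $l+\lambda_1$ flips. Your version merely makes explicit the bookkeeping (the parity flip and the verification that the flat block of $\lambda'$ still satisfies the hypothesis of Lemma \ref{Lsy}) that the paper leaves implicit.
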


  \begin{proof} According to Lemma \ref{Lsysy}, we get  formula (\ref{Lb1}).  According to Proposition \ref{Pb},    $\lambda_1$ is odd which means $l+\lambda_1$ is even, thus formula (\ref{Lb2}) is the result of  formula (\ref{ae}).  For partition $\lambda^{'}$,  $l+\lambda^{'}_1=l+\lambda_1+1$  is odd,  thus  formula (\ref{Lb3}) is the result of   formula (\ref{ao}).
\end{proof}

\begin{rmk}
\begin{enumerate}
                      \item Compared  formulas (\ref{Lb2})  with  (\ref{Lb1}),   the   contribution to symbol of the row $1^{2b}$  is
\be\label{sbe1}
 \Bigg(\!\!\!\ba{c}0\;\;0\cdots 0 \;\; 0\cdots 0\;\; \overbrace{ 0\cdots 0}^{c} \\
\;\;\;0\cdots 0 \;\; \underbrace{1\cdots 1\;1\cdots 1}_{b} \ \ea \Bigg).
\ee
                      \item  Compared formulas  (\ref{Lb3})  with  (\ref{Lb2}), the   contribution to symbol of the row $1^{2c}$  is
                          \be\label{sbe2}
 \Bigg(\!\!\!\ba{c}0\;\;0\cdots 0 \;\; 0\cdots 0\;\; \overbrace{ 1\cdots 1}^{c} \\
\;\;\;0\cdots 0 \;\; \underbrace{0\cdots 0\;0\cdots 0}_{b} \ \ea \Bigg).
\ee

                      \item These patterns (\ref{sbe1}) and (\ref{sbe2}) of the contributions to symbol for the    even rows of a pairwise rows   will continue.
\end{enumerate}\end{rmk}

If  a   partition  in the $B_n$ theory is  added an even row, the new partition is in the same theory  by Proposition \ref{Pb}.
However, if added  an odd row,  the result partition is not in the same theory.  If  added  two odd rows with the same parity,   the new partition is also in the same theory.

We regard  adding an odd  row to a partition   as a formal   operation.  And  Lemma \ref{Lo} can be rewritten   as follows.
\begin{lemma}\label{Lbo}
$\lambda (\lambda_1=\cdots=\lambda_a>\cdots\geq\lambda_{l})$ is a partition in the $B_n$ theory and the last two rows have the  same parities.
If $a>2b+1$,  the symbol of $\lambda$  has the following form
\begin{equation}\label{oLb1}
\left(\ba{@{}c@{}c@{}c@{}c@{}c@{}c@{}c@{}c@{}c@{}c@{}c@{}c@{}c@{}} \cdots&&  \cdots&& \alpha_{m-b} && \alpha_{m-b+1} &&\cdots && \alpha_{m-1}&& \alpha_m \\    & \cdots&&  \alpha_{m-b} +1&& \alpha_{m-b+1} +1 && \cdots && \alpha_{m-1} +1 && \alpha_{m} +1 &\ea \right).
\end{equation}
And the symbol of the partition $\lambda^{'}=\lambda+1^{2b+1}$ is
\begin{equation}\label{oLb2}
\left(\ba{@{}c@{}c@{}c@{}c@{}c@{}c@{}c@{}c@{}c@{}c@{}c@{}c@{}c@{}c@{}c@{}} \cdots&&  \cdots&& \alpha_{m-b}+1 && \alpha_{m-b+1}+1 &&\cdots && \alpha_{m-1}+1&& \,\,\quad && \,\,\quad\\    & \cdots&&  \alpha_{m-b} +1&& \alpha_{m-b+1} +1 && \cdots && \alpha_{m-1} +1 && \alpha_{m} +1 && \alpha_m +1 &\ea \right).
\end{equation}
And the symbol of the partition $\lambda^{''}=\lambda+1^{2b+1}+1^{2c+1},\,(c<b)$  is
\begin{equation}\label{oLb3}
  \left(\ba{@{}c@{}c@{}c@{}c@{}c@{}c@{}c@{}c@{}c@{}c@{}c@{}c@{}c@{}} \cdots&& \cdots && \alpha_{m-b}+1 &&\cdots && \alpha_{m-c+1}+1 &&  \cdots&& \alpha_m +1\\   & \cdots&& \alpha_{m-b} +1 &&\cdots && \alpha_{m-c+1} +2 &&  \cdots && \alpha_{m} +2 &\ea \right).
\end{equation}
\end{lemma}

  \begin{proof} Formula (\ref{oLb1}) is the result of Lemma \ref{Lsysy}.  Since the last two rows have the same parity,  $\lambda_1$ is odd which means $l+\lambda_1$ is even and thus  formula (\ref{oLb2}) is the result of formula (\ref{oL3}).   For the partition $\lambda^{'}$, $l+\lambda^{'}_1=l+\lambda_1+1$ is odd, and thus  formula (\ref{oLb3}) is the result of  formula (\ref{oL2}).
\end{proof}

\begin{rmk}
 Compared  formulas (\ref{oLb3})  with  (\ref{Lb1}), the  contribution to symbol of the two  rows $1^{2b+1}+1^{2c+1}$     is
       \be\label{sbo}
 \Bigg(\!\!\!\ba{c}0\;\;0\cdots 0 \;\; \overbrace{1\cdots 1\;\; 1\cdots1}^{b+1} \\
\;\;\;0\cdots 0 \;\;0\cdots0\;\underbrace{1\cdots 1}_{c} \ \ea \Bigg)=\Bigg(\!\!\!\ba{c}0\;\;0\cdots 0 \;\; \overbrace{1\cdots 1\;\; 1\cdots1}^{b+1} \\
\;\;\;0\cdots 0 \;\;0\cdots0\;\underbrace{0\cdots 0}_{c} \ \ea \Bigg)+\Bigg(\!\!\!\ba{c}0\;\;0\cdots 0 \;\; \overbrace{0\cdots 0\;\; 0\cdots0}^{b+1} \\
\;\;\;0\cdots 0 \;\;0\cdots0\;\underbrace{1\cdots 1}_{c} \ \ea \Bigg).
\ee
Formally, the first term on the right side  can be regarded  as the contribution of the row $1^{2b+1}$ by comparing  formulas (\ref{oLb2})  with (\ref{Lb1}). And  the second  term  can be regarded  as the contribution  of the row $1^{2c+1}$ by comparing  formulas (\ref{oLb3})  with  (\ref{Lb2}).
  \end{rmk}

\subsubsection{Closed formula of  symbol}\label{tb}
 We summary the  remarks of Lemmas \ref{Lbe} and  \ref{Lbo} as follows\footnote{Here, we discuss the contribution to symbol of odd rows formally. We will find that this  method is reasonable after  introducing the map $X_S$ (\ref{XS})in Section 5. }

\begin{center}
\begin{tabular}{|c|c|c|c|}\hline
\multicolumn{4}{|c|}{ Contribution to  symbol of the $i$\,th row of a partition }\\ \hline
Parity of   row & Parity of $i$ &  $L$ &  Contribution to symbol \\ \hline
odd & even  & $\frac{1}{2}(\sum^{m}_{k=i}n_k+1)$ & $\Bigg(\!\!\!\ba{c}0 \;\; 0\cdots \overbrace{ 1\;\; 1\cdots1}^{L} \\
\;\;\;0\cdots 0\;\; 0\cdots 0 \ \ea \Bigg)$   \\ \hline
even & odd    & $\frac{1}{2}(\sum^{m}_{k=i}n_k)$ & $\Bigg(\!\!\!\ba{c}0 \;\; 0\cdots \overbrace{ 1\;\; 1\cdots1}^{L} \\
\;\;\;0\cdots 0\;\; 0\cdots 0 \ \ea \Bigg)$  \\ \hline
even & even    & $\frac{1}{2}(\sum^{m}_{k=i}n_k)$ &  $\Bigg(\!\!\!\ba{c}0 \;\; 0\cdots 0\;\; 0 \cdots 0 \\
\;\;\;0\cdots \underbrace{1 \;\;1\cdots 1}_{L} \ \ea \Bigg)$  \\ \hline
odd & odd     & $\frac{1}{2}(\sum^{m}_{k=i}n_k-1)$  &  $\Bigg(\!\!\!\ba{c}0 \;\; 0\cdots 0\;\; 0 \cdots 0 \\
\;\;\;0\cdots \underbrace{1\; \;1\cdots 1}_{L} \ \ea \Bigg)$  \\ \hline
\end{tabular}
\end{center}

To compute symbol,  besides these rules in the above table,   the contribution to symbol of the first row of a partition  should be calculated as an initial condition. Although,  the  contribution of the first row  can be calculated   directly, there is  another method which is particularly revealing. According to formula (\ref{ls-v}),  the symbol of the first row  $1^l$  is equal to the symbol of the partition $1^l+0^l$.  By using formula (\ref{oL2bo2}), its contribution   is
\begin{equation}\label{F1}
\Bigg(\!\!\!\ba{c}\overbrace{0\;\;0\cdots \cdots 0}^{(l+1)/2}  \\
 \;\;\;\underbrace{1\cdots \cdots 1}_{(l-1)/2}\ \ea \Bigg)
\end{equation}
which is consistent with the remark  of Lemma \ref{Lbo}. We draw the conclusion that the contribution to symbol of the first row can be seen as $i=1$ case in the above table.

We can describe the above table concisely as follows
\begin{flushleft}
\textbf{ Rules ($B_n$)}: Formally, a row in a partition      contribute   1 in succession  from right to left  in  the same row of the symbol. The contribution  of adjoining rows with the different parities occupy the same row of  symbol,  otherwise occupy  the other  row. The number of 1  contributed by even row is   one half length  of the  row.  The number of 1  contributed by the first odd row of a pairwise rows  is   one half of the number which is  the length of the  row plus  one.  The number of 1  contributed by the second odd row of a pairwise rows  is   one half of the number which is   the length of the row minus one.
\end{flushleft}

Compared to original  definition of symbol, the above  rules are easy to remember  and convenient to operator on.
According to the rules, we give a closed formula of the symbol of a rigid partition in the $B_n$ theory.
\begin{Pro}\label{Fb}
For a partition $\lambda=m^{n_m}{(m-1)}^{n_{m-1}}\cdots{1}^{n_1}$ in the $B_n$ theory, we introduce the  following notations
$$\Delta^{T}_i=\frac{1}{2}(\sum^{m}_{k=i}n_k+\frac{1+(-1)^{i+1}}{2}),\quad P^{T}_i=\frac{1+\pi_i}{2}$$
where the superscript $T$ indicate it is related to the top row of the symbol and
$$\pi_i=(-1)^{\sum^{m}_{k=i}n_k}\cdot(-1)^{i}.$$
Other parallel notations
$$\Delta^{B}_i=\frac{1}{2}(\sum^{m}_{k=i}n_k+\frac{1+(-1)^{i}}{2}),\quad  P^{B}_i=\frac{1-\pi_i}{2}$$
where the superscript $B$ indicates  it is related to the bottom row of the symbol and $P^{b}_i$ is a projection operator similar to  $P^{t}_i$.
Notating    the symbol of $\lambda$ as $\sigma(\lambda)$, then we have
\begin{equation}\label{FbFbFb}
\sigma(\lambda)=\sum^m_{i=1}\Bigg\{ \Bigg(\!\!\!\ba{c}0\;\;0\cdots 0\;\; \overbrace{1\cdots 1}^{P^{T}_i \Delta^{T}_i}  \\
\;\;\;\underbrace{0\cdots 0 \;\;0 \cdots 0}_{\frac{l-1}{2}}\ \ea \Bigg)
+
 \Bigg(\!\!\!\ba{c}\overbrace{0\;\;0\cdots 0\;\; 0\cdots 0}^{\frac{l+1}{2}}  \\
\;\;\;0\cdots 0 \;\;\underbrace{1\cdots 1}_{P^{B}_i \Delta^{B}_i}\ \ea \Bigg)\Bigg\}.
\end{equation}
\end{Pro}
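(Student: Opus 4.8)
The plan is to take the row-by-row contribution rules tabulated in \S\ref{tb} as the starting point---these are already justified by Lemmas~\ref{Lbe} and~\ref{Lbo} together with the first-row computation~(\ref{F1})---and to show that the single expression~(\ref{FbFbFb}) reproduces them term by term. The content of the formula is the observation that, for the $i$-th row, the pair of projection operators $(P^T_i,P^B_i)$ decides whether the contribution lands in the top or the bottom row of the symbol, while $(\Delta^T_i,\Delta^B_i)$ records the length $L$ of the corresponding block of $1$'s. So the proof reduces to checking that these two gadgets, built only from the parities of $\mu_i:=\sum_{k=i}^m n_k$ and of $i$, encode exactly the four lines of the table.

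First I would record that $\mu_i=\sum_{k=i}^m n_k$ is the length of the $i$-th added row $1^{\mu_i}$ (equivalently, the height of the $i$-th column of $\lambda$), and that Proposition~\ref{Pb} ensures the pairwise parity structure underlying Lemmas~\ref{Lbe} and~\ref{Lbo} at every stage. Next I would note that, because each elementary step increments an entire right-justified block of entries by exactly $1$ (Lemmas~\ref{Lsy} and~\ref{Lo}), the final symbol is literally the entrywise sum over $i$ of the individual $0/1$ blocks; this is what legitimizes writing $\sigma(\lambda)$ as the closed sum~(\ref{FbFbFb}) rather than as an iterated construction.

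The core of the argument is then a four-case analysis on $(\mu_i \bmod 2,\ i \bmod 2)$. In each case one checks that exactly one of $P^T_i,P^B_i$ equals $1$ and the other $0$, governed by the sign $\pi_i=(-1)^{\mu_i}(-1)^{i}$, so that the contribution is routed to the correct row; and that the surviving factor $\Delta^T_i$ or $\Delta^B_i$ evaluates to the value of $L$ prescribed on that line of the table, namely $\tfrac12(\mu_i+1)$ for an odd row of the appropriate index parity and $\tfrac12\mu_i$ or $\tfrac12(\mu_i-1)$ in the remaining cases. I would also match the two global lengths $\tfrac{l+1}{2}$ and $\tfrac{l-1}{2}$ of the top and bottom rows against~(\ref{Dt}) with $t=-1$, and treat the first row as the $i=1$ instance using~(\ref{F1}) so that it is subsumed by the same formula.

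The main obstacle I anticipate is the sign and parity bookkeeping: the correction terms $\tfrac{1\pm(-1)^{i}}{2}$ inside $\Delta^T_i,\Delta^B_i$ are indexed by the parity of $i$, whereas the quantity $L$ they must reproduce depends on the parity of $\mu_i$, and the two parities are linked only through the same $\pi_i$ that decides the routing. Keeping this alignment consistent across all four cases---and verifying it against a worked example such as the $B_{10}$ symbol~(\ref{exs}) to catch any off-by-one or sign slip---is where essentially all of the effort lies. Once the four cases check out, summing over $i$ and reading off the right-justified blocks yields~(\ref{FbFbFb}) at once.
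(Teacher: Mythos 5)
Your overall strategy---reading the closed formula off the table of row-by-row contributions in \S\ref{tb} via a four-case analysis on the parities of $i$ and $\mu_i=\sum_{k=i}^{m}n_k$---is exactly the (implicit) route of the paper, which offers no argument beyond ``according to the rules.'' The gap is that you defer precisely the step you identify as carrying ``essentially all of the effort'': the four-case verification. If you carry it out, it does not close for the formula as printed. Already at $i=1$: for a rigid $B_n$ partition $\mu_1=l$ is odd, so $\pi_1=(-1)^{\mu_1}(-1)^{1}=+1$, hence $P^{T}_1=1$, $P^{B}_1=0$, and the formula places $\Delta^{T}_1=\tfrac12(l+1)$ ones in the top row of the symbol; but (\ref{F1}) and the fourth line of the table (odd row, odd $i$) place $\tfrac12(l-1)$ ones in the bottom row. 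The same inversion occurs in all four cases: the table routes a contribution to the top row exactly when $\mu_i$ and $i$ have opposite parities, i.e.\ when $\pi_i=-1$, so the projectors would have to be $P^{T}_i=\tfrac{1-\pi_i}{2}$, $P^{B}_i=\tfrac{1+\pi_i}{2}$, and the half-integer offsets must be re-keyed as well: the active length is $\tfrac12\bigl(\mu_i+\tfrac{1-(-1)^{\mu_i}}{2}\bigr)$ for a top-row contribution and $\tfrac12\bigl(\mu_i-\tfrac{1-(-1)^{\mu_i}}{2}\bigr)$ for a bottom-row one, whereas the printed $\Delta^{T}_i$ carries the wrong sign on its correction term (and the inactive $\Delta$ is not even an integer, which is harmless only because it is multiplied by $0$).

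Concretely, on the paper's own example (\ref{exs}), $\lambda=3^3\,2^4\,1^4$ with columns $1^{11},1^{7},1^{3}$, the printed formula yields top row $(1,1,1,1,2,2)$ and bottom row $(0,1,1,1,1)$, instead of the correct $(0,0,1,1,1,1)$ and $(1,1,1,1,2)$. So the literal statement cannot be proved; what your case analysis will actually establish is the corrected version with the projectors interchanged and the offsets keyed to the parity of $\mu_i$. The fix is to run your four cases honestly, record the corrected $P^{T}_i,P^{B}_i,\Delta^{T}_i,\Delta^{B}_i$, and then the entrywise-summation argument you sketch (justified by Lemmas \ref{Lsy}, \ref{Lo}, \ref{Lbe}, \ref{Lbo} and Proposition \ref{Pb}) does complete the proof.
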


\subsection{Symbol of   rigid partition in  the  $C_n$ theory}\label{tc}
The contents of this subsection and next subsection are parallel to the previous subsection. First, we prove the following proposition
\begin{Pro}{\label{Pc}}
The longest two rows in a rigid $C_n$ partition both contain either  an even or an odd number  of boxes.  This pairwise rows then continues. If the Young tableau has an odd number of rows, the row of shortest length has contain an even number of boxes.
\end{Pro}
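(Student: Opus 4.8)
The plan is to mirror, almost line for line, the three-part argument used for Proposition \ref{Pb}, but with the roles of the even and odd parts interchanged. In a $C_n$ partition it is the \emph{odd} integers (rather than the even ones) that are forced to occur an even number of times, and the rigidity hypothesis now forbids an \emph{even} integer from occurring exactly twice. The parity shift between the two theories, recorded by $t=0$ for $C_n$ against $t=-1$ for $B_n$, is precisely what converts the single exceptional ``longest row'' of the $B_n$ picture into an exceptional \emph{pair} of longest rows in the $C_n$ statement, so the combinatorics should transport with this shift built in.

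First I would record the two global counting facts that replace ``the length is odd'' from the proof of Proposition \ref{Pb}. Since every odd integer occurs an even number of times, the number of odd-length rows, $\sum_{j\ \text{odd}} n_j$, is a sum of even numbers and hence even; and the total number of boxes is $2n$, which is even as well. These two parities are what the remaining steps will be balanced against.

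Next, for the pairwise statement I would compare two consecutive rows sitting inside a prospective pair. If $\lambda_i$ and $\lambda_{i+1}$ had opposite parity, then the no-gap condition forces $\lambda_i-\lambda_{i+1}=1$, so a block of equal parts terminates between these two rows; tracing the size of that terminating block shows that either some odd integer would acquire an odd multiplicity (impossible in the $C_n$ theory, where odd integers occur an even number of times) or some even integer would occur exactly twice (contradicting rigidity). I would apply this first to the two longest rows to pin down their common parity, and then propagate it downward pair by pair, over the pairs $(1,2),(3,4),\dots$, to obtain the asserted pattern. The shortest-row claim then follows from a parity count of boxes: inside each pair the two rows are adjacent and share a parity, so by the no-gap condition they are in fact equal and the pair contributes an even number of boxes; if the diagram has an odd number of rows, the shortest row is the unique unpaired one, and since the paired rows contribute an even total while the grand total $2n$ is even, the shortest row must itself be even.

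I expect the genuine difficulty to lie in the pairwise step, and specifically in the case where the longer row of a prospective pair is even: there the quick multiplicity dichotomy above does not close, and one is forced to invoke the already-established pairing of all the rows above. The clean way to organize this is an induction on the blocks of equal parts --- equivalently on the partial sums $\sum_{k\ge i} n_k$ that already govern the symbol in Definition \ref{Dn} --- recording at each block whether it begins at an odd- or an even-indexed row. Setting up that bookkeeping carefully, rather than any single parity check, is where the real work is; once it is in place, the three conclusions of the proposition should drop out as in the $B_n$ case.
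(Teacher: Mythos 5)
There is a genuine gap here, and it begins with the reading of the statement. Throughout Propositions \ref{Pb}--\ref{Pd} the ``rows of the Young tableau'' are the parts of the \emph{transpose} $\lambda^t$ (equivalently, the columns of the standard diagram of $\lambda$): this is the only reading under which the paper's proof of Proposition \ref{Pb} makes sense (it identifies ``the longest row'' with the length $l=\lambda^t_1$ of the partition) and under which the paper's own examples satisfy the claim. With that reading the paper's proof of Proposition \ref{Pc} is essentially one line: the difference between the $(2i-1)$-th and $2i$-th rows is $\lambda^t_{2i-1}-\lambda^t_{2i}=n_{2i-1}$, the multiplicity of the \emph{odd} integer $2i-1$ in $\lambda$, which is even because $\lambda$ is a $C_n$ partition; hence the two rows of each pair (starting with the two longest) have equal parity. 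Rigidity is not needed for the pairwise pattern at all, and the shortest-row claim then follows from the parity of the total $2n$ by the counting you describe.

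You instead take the rows to be the parts $\lambda_1\ge\lambda_2\ge\cdots$ themselves, and under that reading the statement is false: $\lambda=2\,1^2$ is a rigid $C_2$ partition whose two longest parts are $2$ and $1$, and the paper's own example $\lambda=3^2\,2\,1^4$ has parts $3,3,2,1,1,1,1$ with a parity break between the third and fourth. This is exactly the case you flag as the one ``where the quick multiplicity dichotomy does not close'': an even integer occurring exactly once is allowed both by rigidity (which only forbids multiplicity exactly two for even parts) and by the symplectic condition (which only constrains odd parts), and it forces a parity change between consecutive parts of $\lambda$. No bookkeeping on the blocks $\sum_{k\ge i}n_k$ can close this, because the assertion you are trying to prove does not hold for the parts of $\lambda$; it lives on the transpose. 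Your closing step (``the two rows of a pair share a parity, so by the no-gap condition they are equal'') also fails once you pass to $\lambda^t$, which can have gaps (e.g.\ $\lambda^t=(7,3,2)$ for the example above); fortunately equality is not needed there, only that two rows of equal parity contribute an even number of boxes.
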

\begin{proof} For a partition in the $C_n$ theory,  odd integers appear an even number of times.
If the first two rows  are of different parities, then  the  difference of the number of boxes between them will be odd. It  imply that part  '1'  appear odd number of times   which is a  contradiction.  So we have proved that the lengths of the  first two rows are either both odd  or  even.  In the same way, we can prove  that the   next two rows will be of  the same parity. This pairwise rows then  continues.
\end{proof}

Using the above proposition, we can refine  Lemmas \ref{Lsy}  and \ref{Lsysy} as follows
\begin{lemma}\label{Lce}
For a partition $\lambda (\lambda_1=\cdots=\lambda_a>\cdots\geq\lambda_{l})$ in the $C_n$ theory,  the last two rows have the  same parity. $l$ is the length of the partition including the extra 0  if necessary.
If  $a>b$,  the symbol of $\lambda$  has the following form
\begin{equation}\label{Lc1}
 \left(\ba{@{}c@{}c@{}c@{}c@{}c@{}c@{}c@{}c@{}c@{}} \cdots&& \cdots && \alpha_{m-b+1} &&\cdots && \alpha_m \\   & \cdots&& \beta_{m-b+1} && \cdots && \beta_{m} &\ea \right)=\left(\ba{@{}c@{}c@{}c@{}c@{}c@{}c@{}c@{}c@{}c@{}} \cdots&& \cdots && \alpha_{m} &&\cdots && \alpha_m \\   & \cdots&& \alpha_{m}  && \cdots && \alpha_m  &\ea \right).
\end{equation}
And the symbol of the partition $\lambda^{'}=\lambda+1^{2b}$ is
\begin{equation}\label{Lc2}
\left(\ba{@{}c@{}c@{}c@{}c@{}c@{}c@{}c@{}c@{}c@{}} \cdots&& \cdots && \alpha_{m-b+1} &&\cdots && \alpha_m \\   & \cdots&& \beta_{m-b+1}+1 && \cdots && \beta_{m}+1 &\ea \right).
\end{equation}
And the symbol of the partition $\lambda^{''}=\lambda^{'}+1^{2c}$ is
\begin{equation}\label{Lc3}
  \left(\ba{@{}c@{}c@{}c@{}c@{}c@{}c@{}c@{}c@{}c@{}c@{}c@{}c@{}c@{}} \cdots&& \cdots && \alpha_{m-b+1} &&\cdots && \alpha_{m-c+1}+1 &&  \cdots&& \alpha_m +1\\   & \cdots&& \alpha_{m-b+1} +1 &&\cdots && \alpha_{m-c+1} +1 &&  \cdots && \beta_{m} +1 &\ea \right).
\end{equation}
\end{lemma}

  \begin{proof} According to  remark  of  Definition \ref{Dn}, $l$ is even.  According to  Lemma \ref{Lsysy}, we get  formula(\ref{Lc1}).  According to Proposition \ref{Pc},    $\lambda_1$ is even  which means $l+\lambda_1$ is even, and thus formula (\ref{Lc3}) is the result of formula(\ref{ae}).  Similarly,  for partition $\lambda^{'}$,  $l+\lambda^{'}_1=l+\lambda_1+1$  is odd, thus formula(\ref{Lb2}) is the result of formula (\ref{ao}).
\end{proof}

\begin{rmk}\begin{enumerate}
                      \item Compared formulas (\ref{Lc2})  with (\ref{Lc1}),    the  contribution to symbol of row $1^{2b}$  is
\be\label{sce1}
 \Bigg(\!\!\!\ba{c}0\;\;0\cdots 0 \;\; 0\cdots 0\;\; \overbrace{ 0\cdots 0}^{c} \\
\;\;\;0\cdots 0 \;\; \underbrace{1\cdots 1\;1\cdots 1}_{b} \ \ea \Bigg)
\ee
                      \item  Compared formulas (\ref{Lc3})  with (\ref{Lc2}), the  contribution  to symbol  of  row $1^{2c}$  is
                          \be\label{sce2}
 \Bigg(\!\!\!\ba{c}0\;\;0\cdots 0 \;\; 0\cdots 0\;\; \overbrace{ 1\cdots 1}^{c} \\
\;\;\;0\cdots 0 \;\; \underbrace{0\cdots 0\;0\cdots 0}_{b} \ \ea \Bigg)
\ee

                    \end{enumerate}
\end{rmk}

Next, we discuss the contributions to symbol of  odd rows  formally.
\begin{lemma}\label{Lco}
For a partition $\lambda (\lambda_1=\cdots=\lambda_a>\cdots\geq\lambda_{l})$ in the $C_n$ theory,  the last two rows have the  same parity. $l$ is the length of the partition including the extra 0 in the last part of partition if necessary.
If $a>2b+1$,  the symbol of $\lambda$  has the following form
\begin{equation}\label{oLc1}
\left(\ba{@{}c@{}c@{}c@{}c@{}c@{}c@{}c@{}c@{}c@{}c@{}c@{}c@{}c@{}} \cdots&&  \cdots&& \alpha_{m-b} && \alpha_{m-b+1} &&\cdots && \alpha_{m-1}&& \alpha_m \\    & \cdots&&  \alpha_{m-b} && \alpha_{m-b+1}  && \cdots && \alpha_{m-1}  && \alpha_{m}  &\ea \right).
\end{equation}
And the symbol of the partition $\lambda^{'}=\lambda+1^{2b+1}$ is
\begin{equation}\label{oLc2}
\left(\ba{@{}c@{}c@{}c@{}c@{}c@{}c@{}c@{}c@{}c@{}c@{}c@{}c@{}c@{}c@{}c@{}} \cdots&&  \cdots&& \alpha_{m-b}+1 && \alpha_{m-b+1}+1 &&\cdots && \alpha_{m-1}+1&& \,\,\quad && \,\,\quad\\    & \cdots&&  \alpha_{m-b} && \alpha_{m-b+1}  && \cdots && \alpha_{m-1} && \alpha_{m}  && \alpha_m &\ea \right).
\end{equation}
And the symbol of the partition $\lambda^{''}=\lambda+1^{2b+1}+1^{2c+1}, \,(c<b),$  is
\begin{equation}\label{oLc3}
  \left(\ba{@{}c@{}c@{}c@{}c@{}c@{}c@{}c@{}c@{}c@{}c@{}c@{}c@{}c@{}} \cdots&& \cdots && \alpha_{m-b}+1 &&\cdots && \alpha_{m-c+1}+1 &&  \cdots&& \alpha_m +1\\   & \cdots&& \alpha_{m-b}  &&\cdots && \alpha_{m-c+1} +1 &&  \cdots && \alpha_{m} +1 &\ea \right).
\end{equation}
\end{lemma}

  \begin{proof} Formula(\ref{oLc1}) is the result of Lemma \ref{Lsysy}. Since $\lambda_1$ is even  which means $l+\lambda_1$ is even, thus formula(\ref{oLc2}) is the result of formula(\ref{oL3}).   For partition $\lambda^{'}$, $l+\lambda^{'}_1=l+\lambda_1+1$ is odd, thus the   formula (\ref{oLc3}) is the result of formula(\ref{oL2}).
\end{proof}

\begin{rmk}
   Compared formulas (\ref{oLc3})  with  (\ref{Lc1}), the contribution to symbol of $1^{2b+1}+1^{2c+1}$ is
       \be\label{sco}
 \Bigg(\!\!\!\ba{c}0\;\;0\cdots 0 \;\; \overbrace{1\cdots 1\;\; 1\cdots1}^{b+1} \\
\;\;\;0\cdots 0 \;\;0\cdots0\;\underbrace{1\cdots 1}_{c} \ \ea \Bigg)=\Bigg(\!\!\!\ba{c}0\;\;0\cdots 0 \;\; \overbrace{1\cdots 1\;\; 1\cdots1}^{b+1} \\
\;\;\;0\cdots 0 \;\;0\cdots0\;\underbrace{0\cdots 0}_{c} \ \ea \Bigg)+\Bigg(\!\!\!\ba{c}0\;\;0\cdots 0 \;\; \overbrace{0\cdots 0\;\; 0\cdots0}^{b+1} \\
\;\;\;0\cdots 0 \;\;0\cdots0\;\underbrace{1\cdots 1}_{c} \ \ea \Bigg).
\ee
Formally, the first term on the right side  can be regarded  as the contribution  of row $1^{2b+1}$ by comparing formulas (\ref{oLc2})  with (\ref{Lc1}). And  the second  term  can be regarded  as the contribution  of row $1^{2c+1}$ by  comparing formulas (\ref{oLc3})  with (\ref{Lc2}).
  \end{rmk}

 We  can summary the above results  as the  following stable
\begin{center}
\begin{tabular}{|c|c|c|c|}\hline
\multicolumn{4}{|c|}{ Contribution to  symbol of the $i$\,th row}\\ \hline
Parity of the length of   row & Parity of $i+1$  & $L$ & Contribution to symbol  \\ \hline
odd & even& $\frac{1}{2}(\sum^{m}_{k=i}n_k+1)$   & $\Bigg(\!\!\!\ba{c}0 \;\; 0\cdots \overbrace{ 1\;\; 1\cdots1}^{L} \\
\;\;\;0\cdots 0\;\; 0\cdots 0 \ \ea \Bigg)$  \\ \hline
even & odd  & $\frac{1}{2}(\sum^{m}_{k=i}n_k)$  & $\Bigg(\!\!\!\ba{c}0 \;\; 0\cdots \overbrace{ 1\;\; 1\cdots1}^{L} \\
\;\;\;0\cdots 0\;\; 0\cdots 0 \ \ea \Bigg)$   \\ \hline
even & even  & $\frac{1}{2}(\sum^{m}_{k=i}n_k)$ &  $\Bigg(\!\!\!\ba{c}0 \;\; 0\cdots 0\;\; 0 \cdots 0 \\
\;\;\;0\cdots \underbrace{1 \;\;1\cdots 1}_{L} \ \ea \Bigg)$    \\ \hline
odd & odd& $\frac{1}{2}(\sum^{m}_{k=i}n_k-1)$  &  $\Bigg(\!\!\!\ba{c}0 \;\; 0\cdots 0\;\; 0 \cdots 0 \\
\;\;\;0\cdots \underbrace{1\; \;1\cdots 1}_{L} \ \ea \Bigg)$       \\ \hline
\end{tabular}
\end{center}
It is easy to check that the contributions to symbol of the first two rows  are consistent with this table. Note that this table is the same with the table in the $B_n$ case except checking  of  the  parity of the index  $i+1$.

\subsection{Symbol of   rigid partition in  the $D_n$ theory}\label{td}
 We can prove the following proposition similar to the proof of Proposition \ref{Pb}.
\begin{Pro}{\label{Pd}}
The longest row in a rigid $D_n$ partition always contains an even number of boxes. And the following two rows are either both of even length or both of odd length. This pairwise rows then continue. If the Young tableau has an even number of rows the row of the shortest length has to be even.
\end{Pro}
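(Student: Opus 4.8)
The plan is to follow the proof of Proposition \ref{Pb} almost verbatim, replacing the global parity of $B_n$ by that of $D_n$. I would begin with the counting step. A rigid $D_n$ partition $\lambda$ is a partition of $2n$ in which every even integer occurs an even number of times, so the even parts account for an even number of boxes; since $2n$ is even, the odd parts then also account for an even number of boxes, and because each odd part is odd this forces the number of odd parts to be even. The number of even parts is even for the same multiplicity reason, so the length $l$ of $\lambda$ is even. This is exactly the fact invoked in the footnote to Lemma \ref{Ld}, and it is the $D_n$ counterpart of ``the length is odd'' in the $B_n$ argument.

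I would next establish the pairwise pattern in the same way as Proposition \ref{Pb}. Reading the rows downward, suppose two rows that are to be paired had opposite parity. By rigidity $\lambda_i-\lambda_{i+1}\le 1$, so opposite parity forces $\lambda_i-\lambda_{i+1}=1$, i.e.\ a unit drop in row length; tracking this drop through the multiplicity bookkeeping makes some even integer occur an odd number of times, contradicting the orthogonality hypothesis (this is the analogue of the ``part $2$ appears an odd number of times'' step). Hence, apart from the distinguished first row, the rows fall into consecutive pairs of equal parity, and iterating propagates the pattern down the whole tableau. Combining this with the counting step fixes the parity of the longest row: every equal-parity pair carries an even number of odd-length rows, so the total number of odd rows is congruent modulo $2$ to the parity contributed by the unpaired row(s); since that total is even, the longest row must have even length — the $D_n$ mirror of the longest row being odd in Proposition \ref{Pb}.

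For the final assertion I would reproduce the closing paragraph of Proposition \ref{Pb}: the boxes in any complete pair number an even amount, so when the Young tableau has an even number of rows the unpaired extreme rows together contain an even number of boxes, whence, the longest row being even, the shortest row is even as well. Throughout, the only place the specifically $D_n$ hypotheses are used is the orthogonality condition (even parts, even multiplicity) together with the no-gap condition.

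The parity counting and the box-parity accounting are routine; the step that carries the real weight is the pairwise argument. The hard part will be verifying cleanly that a parity jump between two paired rows genuinely forces an even integer to acquire an odd multiplicity, rather than merely an odd integer — i.e.\ that the jump is located at an even value. This is precisely the subtle point already handled in the $B_n$ case, and I would lean on the no-gap and orthogonality conditions there exactly as Proposition \ref{Pb} does.
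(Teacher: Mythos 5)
Your counting step is fine and correctly establishes that the length $l$ of a rigid $D_n$ partition is even, but the rest of the argument founders on a misreading of what a ``row'' means here. Throughout the paper (see the proof of Proposition \ref{Pb}, the lemmas of Section 3, and the closed formulas, where the $i$th row of $\lambda=m^{n_m}\cdots 1^{n_1}$ has length $\sum_{k\geq i}n_k$), the rows of the Young tableau are the parts of the transpose $\lambda^t$, not the parts $\lambda_i$ themselves. Under your reading the statement is simply false: the paper's own $D_{10}$ example $4^2\,3\,2^2\,1^5$ is rigid, yet $\lambda_2=4$ and $\lambda_3=3$ have opposite parity. This is exactly why the step you flag as ``the hard part'' --- showing that a unit drop $\lambda_i-\lambda_{i+1}=1$ forces some \emph{even} integer to acquire odd multiplicity --- cannot be carried out: it is not true.

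Under the correct reading no subtlety remains. The longest row is $\lambda^t_1=l$, so your counting step already proves the first claim directly, with no appeal to the pairwise pattern. The pairwise pattern follows from the identity $\lambda^t_{2k}-\lambda^t_{2k+1}=n_{2k}$, the multiplicity of the even integer $2k$, which is even by the $D_n$ condition; this is precisely the ``part $2$ appears an odd number of times'' contradiction of Proposition \ref{Pb}, applied to $\lambda^t_2-\lambda^t_3=n_2$. Finally, when the tableau has an even number of rows ($\lambda_1$ even), the paired rows $2,\dots,\lambda_1-1$ contribute an even number of boxes, so $\lambda^t_1+\lambda^t_{\lambda_1}\equiv|\lambda|=2n\equiv 0\pmod 2$, and the shortest row is even because the longest is. Note also that your own bookkeeping leaves \emph{two} unpaired rows (the first and the last) when the number of rows is even, so even granting your pairwise step you would only conclude that the first and last rows share a parity, not that the first row is even.
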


Using the above proposition, we can refine  Lemma \ref{Lsy}  and \ref{Lsysy} as follows\begin{lemma}\label{Lde}
For a partition $\lambda (\lambda_1=\cdots=\lambda_a>\cdots\geq\lambda_{l})$ in the $D_n$ theory,  the last two rows have same parity. $l$ is the length of the partition including the extra 0 as the last part of partition if necessary.
If $a>b$,  the symbol of $\lambda$  has the  following form
\begin{equation}\label{Ld1}
 \left(\ba{@{}c@{}c@{}c@{}c@{}c@{}c@{}c@{}c@{}c@{}} \cdots&& \cdots && \alpha_{m-b+1} &&\cdots && \alpha_m \\   & \cdots&& \beta_{m-b+2} && \cdots && \beta_{m+1} &\ea \right)=\left(\ba{@{}c@{}c@{}c@{}c@{}c@{}c@{}c@{}c@{}c@{}} \cdots&& \cdots && \alpha_{m} &&\cdots && \alpha_m \\   & \cdots&& \alpha_{m}-1  && \cdots && \alpha_m-1  &\ea \right).
\end{equation}
And the symbol of the partition $\lambda^{'}=\lambda+1^{2b}$ is
\begin{equation}\label{Ld2}
\left(\ba{@{}c@{}c@{}c@{}c@{}c@{}c@{}c@{}c@{}c@{}} \cdots&& \cdots && \alpha_{m-b+1} &&\cdots && \alpha_m \\   & \cdots&& \beta_{m-b+2}+1 && \cdots && \beta_{m+1}+1 &\ea \right).
\end{equation}
And the symbol of the partition $\lambda^{''}=\lambda^{'}+1^{2c}$ $ (c<b)$ is
\begin{equation}\label{Ld3}
  \left(\ba{@{}c@{}c@{}c@{}c@{}c@{}c@{}c@{}c@{}c@{}c@{}c@{}c@{}c@{}} \cdots&& \cdots && \alpha_{m-b+1} &&\cdots && \alpha_{m-c+1}+1 &&  \cdots&& \alpha_m +1\\   & \cdots && \beta_{m-b+2}+1  &&\cdots && \beta_{m-c+2} +1 &&  \cdots && \beta_{m+1} +1 &\ea \right).
\end{equation}
\end{lemma}

  \begin{proof} According to the   remark  of  Definition \ref{Dn}, $l$ is odd.  Since $t=1$ in the $D_n$ theory,   combining  Lemma \ref{Lsysy}, we get formula(\ref{Ld1}).  According to Proposition \ref{Pd},    $\lambda_1$ is odd  which means $l+\lambda_1$ is even, thus formula (\ref{Ld3}) is the result of formula (\ref{ae}).  Similarly,  for partition $\lambda^{'}$,  $l+\lambda^{'}_1=l+\lambda_1+1$  is odd, thus  formula (\ref{Ld2}) is the result of formula (\ref{ao}).
\end{proof}

\begin{rmk} \begin{enumerate}
                      \item Compared formulas (\ref{Ld2})  with (\ref{Ld1}),   the contribution to symbol  of the row $1^{2b}$ is
\be\label{sde1}
 \Bigg(\!\!\!\ba{c}0\;\;0\cdots 0 \;\; \overbrace{ 1\cdots 1\;\; 1\cdots 1}^{b} \\
\;\;\;0\cdots 0 \;\;0\cdots 0\; \underbrace{0\cdots 0}_{c} \ \ea \Bigg)
\ee
                      \item  Compared formulas (\ref{Ld3})  with (\ref{Ld2}),  the contribution to symbol  of the row $1^{2c}$ is
 \be\label{sde2}
 \Bigg(\!\!\!\ba{c}0\;\;0\cdots 0 \;\; \overbrace{ 0\cdots 0\;\; 0\cdots 0}^{b} \\
\;\;\;0\cdots 0 \;\;0\cdots 0\; \underbrace{1\cdots 1}_{c} \ \ea \Bigg)
\ee

                    \end{enumerate}
\end{rmk}

Next, we discuss the contribution to symbol of the odd rows formally.
\begin{lemma}\label{Ldo}
For a partition $\lambda (\lambda_1=\cdots=\lambda_a>\cdots\geq\lambda_{l})$ in the $D_n$ theory,  the last two rows have the  same parity. $l$ is the length of the partition including the extra 0  if necessary for the computation of symbol.
If $a>2b+1$,  the symbol of $\lambda$  has the following form
\begin{equation}\label{oLd1}
\left(\ba{@{}c@{}c@{}c@{}c@{}c@{}c@{}c@{}c@{}c@{}c@{}c@{}c@{}c@{}} \cdots&&  \cdots&& \alpha_{m-b} && \alpha_{m-b+1} &&\cdots && \alpha_{m-1}&& \alpha_m \\    & \cdots&&  \alpha_{m-b}-1 && \alpha_{m-b+1}-1  && \cdots && \alpha_{m-1}-1  && \alpha_{m}-1  &\ea \right).
\end{equation}
And the symbol of the partition $\lambda^{'}=\lambda+1^{2b+1}$ is
\begin{equation}\label{oLd2}
\left(\ba{@{}c@{}c@{}c@{}c@{}c@{}c@{}c@{}c@{}c@{}c@{}c@{}c@{}c@{}c@{}c@{}} \cdots&&  \cdots&& \alpha_{m-b}+1 && \alpha_{m-b+1}+1 &&\cdots && \alpha_{m-1}+1&& \,\,\quad && \,\,\quad\\    & \cdots&&  \alpha_{m-b}-1 && \alpha_{m-b+1}-1  && \cdots && \alpha_{m-1}-1 && \alpha_{m}-1  && \alpha_m-1 &\ea \right).
\end{equation}
And the symbol of the partition $\lambda^{''}=\lambda+1^{2b+1}+1^{2c+1},\,(c<b)$  is
\begin{equation}\label{oLd3}
  \left(\ba{@{}c@{}c@{}c@{}c@{}c@{}c@{}c@{}c@{}c@{}c@{}c@{}c@{}c@{}} \cdots&& \cdots && \alpha_{m-b}+1 &&\cdots && \alpha_{m-c+1}+1 &&  \cdots&& \alpha_m +1\\   & \cdots&& \alpha_{m-b}  &&\cdots && \alpha_{m-c+1}  &&  \cdots && \alpha_{m}  &\ea \right).
\end{equation}
\end{lemma}

  \begin{proof} Formula(\ref{oLd1}) is the result of Lemma \ref{Lsysy}. Since $\lambda_1$ is odd , $l+\lambda_1$ is even, thus formula(\ref{oLd2}) is the result of formula (\ref{oL3}).   For partition $\lambda^{'}$, $l+\lambda^{'}_1=l+\lambda_1+1$ is odd, thus formula (\ref{oLd3}) is the result of formula(\ref{oL2}).
\end{proof}

\begin{rmk}
    Compared formulas (\ref{oLd3})  with (\ref{oLd1}), the contributions to symbol of rows $1^{2b+1}+1^{2c+1}$   is
       \be\label{sdo}
 \Bigg(\!\!\!\ba{c}0\;\;0\cdots 0 \;\; \overbrace{1\cdots 1\;\; 1\cdots1}^{b+1} \\
\;\;\;0\cdots 0 \;\;0\cdots0\;\underbrace{1\cdots 1}_{c} \ \ea \Bigg)=\Bigg(\!\!\!\ba{c}0\;\;0\cdots 0 \;\; \overbrace{1\cdots 1\;\; 1\cdots1}^{b+1} \\
\;\;\;0\cdots 0 \;\;0\cdots0\;\underbrace{0\cdots 0}_{c} \ \ea \Bigg)+\Bigg(\!\!\!\ba{c}0\;\;0\cdots 0 \;\; \overbrace{0\cdots 0\;\; 0\cdots0}^{b+1} \\
\;\;\;0\cdots 0 \;\;0\cdots0\;\underbrace{1\cdots 1}_{c} \ \ea \Bigg).
\ee
Formally, the first term on the right side of the  above formula can be regarded  as the contribution to symbol of the $1^{2b+1}$ by comparing formula(\ref{oLd2})  with  (\ref{Ld1}). And  the second  term  can be regarded  as the contribution  of  $1^{2c+1}$ by comparing formulas (\ref{oLd3})  with (\ref{oLd2}).
  \end{rmk}
  We summary these results  as the following table
\begin{center}
\begin{tabular}{|c|c|c|c|}\hline
\multicolumn{4}{|c|}{ Contribution to  symbol of the $i$\,th row of a partition }\\ \hline
Parity of length of  row & Parity of $i+2$  & $L$  & Contribution to symbol  \\ \hline
odd & even & $\frac{1}{2}(\sum^{m}_{k=i}n_k+1)$  & $\Bigg(\!\!\!\ba{c}0 \;\; 0\cdots \overbrace{ 1\;\; 1\cdots1}^{L} \\
\;\;\;0\cdots 0\;\; 0\cdots 0 \ \ea \Bigg)$  \\ \hline
even & odd & $\frac{1}{2}(\sum^{m}_{k=i}n_k)$ & $\Bigg(\!\!\!\ba{c}0 \;\; 0\cdots \overbrace{ 1\;\; 1\cdots1}^{L} \\
\;\;\;0\cdots 0\;\; 0\cdots 0 \ \ea \Bigg)$     \\ \hline
even & even  & $\frac{1}{2}(\sum^{m}_{k=i}n_k)$  &  $\Bigg(\!\!\!\ba{c}0 \;\; 0\cdots 0\;\; 0 \cdots 0 \\
\;\;\;0\cdots \underbrace{1 \;\;1\cdots 1}_{L} \ \ea \Bigg)$   \\ \hline
odd & odd    & $\frac{1}{2}(\sum^{m}_{k=i}n_k-1)$  &  $\Bigg(\!\!\!\ba{c}0 \;\; 0\cdots 0\;\; 0 \cdots 0 \\
\;\;\;0\cdots \underbrace{1\; \;1\cdots 1}_{L} \ \ea \Bigg)$   \\ \hline
\end{tabular}
\end{center}
It is easy to check that the contribution to symbol of the first two rows as initial condition are consistent with this table. Note that this table is the same  with the table in the $B_n$ case.

\subsection{Closed formula of  symbols  in the $B_n$, $C_n$, and $D_n$ theories}
With the  experiences learned  in the previous  subsections,  we find that there are  some common characteristics of the computational rules  of symbol in the  $B_n$, $C_n$, and $D_n$ theories.  From Lemmas \ref{Lsy}  and \ref{Lo}, the differences of the construction of  symbol in different  theories lie in the definition of the length of partition. From Definition  \ref{Dn},  we should append a 0 as the last part of the partition $\lambda$ in the $D_n$ theory   and   in the $C_n$ theory with  odd length of the partition.
 For a partition $\lambda=m^{n_m}{(m-1)}^{n_{m-1}}\cdots{1}^{n_1}$, we denote  $$n_0=\frac{1-(-1)^{\sum^{m}_{k=1}n_k}}{2}+t$$
where  $t=-1$ for $B_n$ theory, $t=0$ for $C_n$ theory, and  $t=1$ for $D_n$ theory.
Then the length $l$ of the partition  in  Lemmas \ref{Lsy} and  \ref{Lo}  is equal to the length of the following partition
  \begin{equation*}
    \lambda=m^{n_m}{(m-1)}^{n_{m-1}}\cdots{1}^{n_1}0^{n_0}.
  \end{equation*}
By using  Lemmas \ref{Lsy} and  \ref{Lo},   the contributions to symbol of  a pairwise rows  fit into the same forms as shown in the  formulas (\ref{sbe1}),(\ref{sbe2}),(\ref{sbo}),(\ref{sce1}),(\ref{sce2}),(\ref{sco}),(\ref{sde1}),(\ref{sde2}), and (\ref{sdo}).

Note that the tables of computational  rules of symbols in the  $B_n$, $C_n$,  and $D_n$ theories  are  exact in the  same  pattern.   We can describe  these tables   uniformly as follows\footnote{Here, we discuss the contribution to symbol of the odd rows formally. We will find that this formal method is reasonable after we introduce the maps $X_S (\ref{XS}), Y_S (\ref{YS})$ in Section 5. }
\begin{center}
\begin{tabular}{|c|c|c|c|}\hline
\multicolumn{4}{|c|}{ Contribution to  symbol of the $i$\,th row of a partition }\\ \hline
Parity of  row & Parity of $i+t+1$ & $L$ & Contribution to symbol   \\ \hline
odd & even & $\frac{1}{2}(\sum^{m}_{k=i}n_k+1)$  & $\Bigg(\!\!\!\ba{c}0 \;\; 0\cdots \overbrace{ 1\;\; 1\cdots1}^{L} \\
\;\;\;0\cdots 0\;\; 0\cdots 0 \ \ea \Bigg)$  \\ \hline
even & odd  & $\frac{1}{2}(\sum^{m}_{k=i}n_k)$ & $\Bigg(\!\!\!\ba{c}0 \;\; 0\cdots \overbrace{ 1\;\; 1\cdots1}^{L} \\
\;\;\;0\cdots 0\;\; 0\cdots 0 \ \ea \Bigg)$    \\ \hline
even & even & $\frac{1}{2}(\sum^{m}_{k=i}n_k)$  &  $\Bigg(\!\!\!\ba{c}0 \;\; 0\cdots 0\;\; 0 \cdots 0 \\
\;\;\;0\cdots \underbrace{1 \;\;1\cdots 1}_{L} \ \ea \Bigg)$    \\ \hline
odd & odd & $\frac{1}{2}(\sum^{m}_{k=i}n_k-1)$ &  $\Bigg(\!\!\!\ba{c}0 \;\; 0\cdots 0\;\; 0 \cdots 0 \\
\;\;\;0\cdots \underbrace{1\; \;1\cdots 1}_{L} \ \ea \Bigg)$       \\ \hline
\end{tabular}
\end{center}

We can describe the above table  concisely as the following rules.
\begin{flushleft}
  \textbf{ Rules }: Formally, a row of  a partition      contribute   '1' in succession  from right to left  in  the same row of the symbol. The contribution  of adjoining rows with  different parities occupy the same row of  symbol,  otherwise occupy  the other  row. The number of '1'  contributed by even row is   one half length  of the  row.  The number of '1'  contributed by the first odd row of a pairwise rows  is   one half of the number which is  the length of the  row plus  one.  The number of '1'  contributed by the second odd row of a pairwise rows  is   one half of the number which is   the length of the row minus one.

\end{flushleft}

According to  the above table, it is easy to   get a  closed formula of  symbol for the rigid partitions in the $B_n$, $C_n$, and $D_n$ theories.
\begin{Pro}\label{Fbcd}
For a partition $\lambda=m^{n_m}{(m-1)}^{n_{m-1}}\cdots{1}^{n_1}$, we introduce two notations
$$\Delta^{T}_i=\frac{1}{2}(\sum^{m}_{k=i}n_k+\frac{1+(-1)^{i+1}}{2}),\quad\quad  P^{T}_i=\frac{1+\pi_i}{2}$$
where the superscript $T$ indicates it is related to the top row of the symbol and
$$\pi_i=(-1)^{\sum^{m}_{k=i}n_k}\cdot(-1)^{i+1+t},$$
for $B_n(t=-1)$, $C_n(t=0)$, and $D_n(t=1)$ theories.
Other parallel notations
$$\Delta^{B}_i=\frac{1}{2}(\sum^{m}_{k=i}n_k+\frac{1+(-1)^{i}}{2}),\quad \quad  P^{B}_i=\frac{1-\pi_i}{2}$$
where the superscript $B$ indicates it is  related to the bottom row of the symbol and $P^{B}_i$ is a projection operator similar to  $P^{T}_i$.
Then the symbol $\sigma(\lambda)$   is
\begin{equation}\label{FbFb}
\sigma(\lambda)=\sum^m_{i=1} \Bigg\{ \Bigg(\!\!\!\ba{c}0\;\;0\cdots 0\;\; \overbrace{1\cdots 1}^{P^{T}_i \Delta^{T}_i}  \\
\;\;\;\underbrace{0\cdots 0 \;\;0 \cdots 0}_{l+t}\ \ea \Bigg)
+
 \Bigg(\!\!\!\ba{c}\overbrace{0\;\;0\cdots 0\;\; 0\cdots 0}^{l}  \\
\;\;\;0\cdots 0 \;\;\underbrace{1\cdots 1}_{P^{B}_i \Delta^{B}_i}\ \ea \Bigg) \Bigg\}
\end{equation}
with  $l=(m+(1-(-1)^m)/2)/2$.
\end{Pro}

\section{Applications}
As an  invariant of the partition, symbol can be used to study the $S$ duality pair of rigid unipotent partition \cite{GW08} \cite{Wy09}.  In \cite{ShO06},  we propose a simple rule to compute symbols of  partitions with  only even rows.  Symbol of a general partition in the $B_n, C_n$,  and $D_n$  theories can be calculated  by reducing a  partition into two partitions with only even rows  through two maps $X_S$ and $Y_S$. These two maps can be explained by the table in the previous section.
\subsection{Maps: $X_S$ and $Y_S$}

A general partition $\lambda$ in the  $B_n, C_n$, and $D_n$  theories can be reduced to two partitions with only even rows  by the  maps $X_S$ and $Y_S$ \cite{Wy09}.  The symbol of the partition   $\lambda$  is the sum of    the symbols of the result   partitions.  Before introducing these two maps, we prove the   following  proposition.
\begin{Pro}\label{Dual}
For a partition in the $B_n$ theory or a partition with only  even  rows in the $C_n$ theory,  we have $\alpha_i\leq \beta_{i+t}$.  For a partition in the $D_n$  theory or a partition with only  odd rows  in the $C_n$ theory, we have $\alpha_i\geq \beta_{i+t}$.
\end{Pro}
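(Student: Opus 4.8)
The plan is to read the two rows of the symbol off the closed formula of Proposition~\ref{Fbcd} and to reduce each inequality to an elementary count governed by the parities recorded in Propositions~\ref{Pb}, \ref{Pc}, and~\ref{Pd}.

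Since symbols are added right-adjusted, the entry sitting $p$ places from the right end of the top (respectively bottom) row is exactly the number of columns of $\lambda$ whose contribution to that row is a run of at least $p$ ones. Writing $h_i=\sum_{k=i}^m n_k$ for the height of the $i$-th column and reading the assignment rules from the uniform table, column $i$ feeds the top row precisely when $h_i+i+t$ is even and the bottom row otherwise, contributing a run of length $\lceil h_i/2\rceil$ in the first case and $\lfloor h_i/2\rfloor$ in the second. Hence the $p$-th entries from the right are
\[
A_p=\#\{\,i:\ h_i+i+t\ \text{even and}\ h_i\ge 2p-1\,\},\qquad
B_p=\#\{\,i:\ h_i+i+t\ \text{odd and}\ h_i\ge 2p\,\}.
\]
Because $\alpha_i$ and $\beta_{i+t}$ occupy the same position from the right end (right adjustment makes their distance to that end equal, since the bottom row has exactly $t$ more entries than the top), the four assertions of the proposition are precisely the inequalities $A_p\le B_p$, respectively $A_p\ge B_p$, for all $p\ge 1$; which of the two directions occurs is what the four cases encode.

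The heights satisfy $h_1\ge h_2\ge\cdots\ge h_m$, so $\{i:h_i\ge 2p-1\}$ and $\{i:h_i\ge 2p\}$ are nested initial segments, differing only in the columns of odd height exactly $2p-1$. This turns $A_p$ and $B_p$ into parities counted over an initial segment, and the comparison becomes a question about the distribution of the parities of $h_i+i+t$ along that segment. Here the rigidity constraints enter decisively: in the $B_n$ and $D_n$ theories every even part occurs an even number of times, so $n_i$ is even for even $i$ and therefore $h_i\equiv h_{i+1}$ whenever $i$ is even; in the $C_n$ theory the same holds with odd and even interchanged. Combined with the pairwise parity pattern of Propositions~\ref{Pb}, \ref{Pc}, and~\ref{Pd}, this shows that the relevant heights fall into consecutive blocks of constant parity within which the top/bottom assignment simply alternates with $i$, so that each full block contributes $0$ to $A_p-B_p$ and only the extremal block (the tallest columns, together with the boundary columns of height exactly $2p-1$) produces a net imbalance. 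The sign of that imbalance is fixed by whether the tallest relevant column lands on the top or the bottom row, which in turn is determined by the parity of the longest row and the value of $t$; this is the mechanism that produces $A_p\le B_p$ for the $B_n$ family and $A_p\ge B_p$ for the $D_n$ family, and correspondingly for the two $C_n$ sub-cases.

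The step I expect to be the main obstacle is the boundary bookkeeping just indicated: one must account precisely for the columns of height exactly $2p-1$ (which enter $A_p$ but not $B_p$) and for the extra zero column appended in the $D_n$ theory and in the odd-length $C_n$ theory, since this appended column shifts both the effective length and the starting parity and is what ultimately distinguishes the two $C_n$ sub-cases. A convenient cross-check that avoids the global count is induction on the number of columns: building $\lambda$ one column at a time, a single column for even height and a pair of columns for odd heights, the refined Lemmas~\ref{Lbe} and~\ref{Lbo} (and their analogues \ref{Lce}, \ref{Lco} for $C_n$ and \ref{Lde}, \ref{Ldo} for $D_n$) already display the local relations $\beta=\alpha+1$ on the freshly created tail in the $B_n$ case and $\beta=\alpha-1$ in the $D_n$ case; one then only has to verify that adjoining a further block never drives an existing difference $\beta_{i+t}-\alpha_i$ past zero. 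I would present the closed-formula count as the main argument and keep the inductive version as corroboration.
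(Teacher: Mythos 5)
Your reduction of the two inequalities to the counts $A_p\le B_p$ (resp.\ $A_p\ge B_p$), where $p$ is the common distance of $\alpha_i$ and $\beta_{i+t}$ from the right end, is correct, and it is a genuinely different route from the paper's: the paper instead builds the partition one column (one ``row'' in its terminology) at a time and tracks the contributions through the refined Lemmas \ref{Lbe}, \ref{Lbo}, \ref{Lce}, \ref{Lco}, \ref{Lde}, \ref{Ldo} together with the parity Propositions \ref{Pb}, \ref{Pc}, \ref{Pd} --- which is exactly the inductive ``cross-check'' you mention but do not carry out. Your formulas for $A_p$ and $B_p$, and the observation that a column feeds the top row iff $h_i+i+t$ is even with run length $\lceil h_i/2\rceil$ (and the bottom row otherwise with run length $\lfloor h_i/2\rfloor$), agree with the table and with the worked examples.

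The gap is precisely at the step you flag as the main obstacle. The claim that ``each full block contributes $0$ to $A_p-B_p$ and only the extremal block (the tallest columns, together with the boundary columns of height exactly $2p-1$) produces a net imbalance'' is false. In the $B_n$ theory an odd-height pair $(h_{2k},h_{2k+1})$ always sends its \emph{taller} column to the top row and its shorter one to the bottom row, so it contributes $[h_{2k}\ge 2p-1]-[h_{2k+1}\ge 2p]=+1$ for \emph{every} $p$ with $\lfloor h_{2k+1}/2\rfloor<p\le\lceil h_{2k}/2\rceil$, not only when some height equals $2p-1$. Concretely, take $\lambda=5^1\,4^4\,3^1\,2^2\,1^1$ in $B_{14}$: the column heights are $(9,8,6,5,1)$, the constant-parity blocks are $\{1\}$, $\{2,3\}$, $\{4,5\}$, and at $p=2$ the non-extremal block $\{4,5\}$ (heights $5$ and $1$, neither equal to $2p-1=3$, assigned top and bottom respectively) contributes $+1$ to $A_2-B_2$; the inequality $A_2\le B_2$ survives only because column $1$ supplies a compensating $-1$. (Direct computation gives top row $(0,0,2,2,2)$ and bottom row $(2,2,2,2)$, so $A_2=B_2=2$.) The paper's own example $3^3\,2^4\,1^4$ shows the same non-local surplus at $p=3$. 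So the imbalance is not confined to the locations you name, and fixing its sign ``by where the tallest column lands'' cannot close the argument. What actually has to be proved is a domination statement: the sorted multiset of bottom-row run lengths majorizes the sorted multiset of top-row run lengths (each top run created by a pair is matched against a longer bottom run created by an earlier column). That is exactly the content of the paper's column-by-column induction; to repair your proof you should either establish this domination directly from the pairing $(1),(2,3),(4,5),\dots$, or promote your inductive fallback to the main argument and verify the preservation step it requires.
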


  \begin{proof} For a partition in the $B_n$ theory, according to formula(\ref{F1}),   the symbol of  the first row with $2m+1$ boxes is

  $$\Bigg(\!\!\!\ba{c}0 \;\; 0\cdots 0\;\; 0 \cdots 0 \\
\;\;\;\underbrace{1\cdots 1\; \;1\cdots 1}_{m} \ \ea \Bigg).$$
So, we draw the conclusion   for the first row.

According to Proposition \ref{Pb},  the following two rows of the first row are either both odd  or  even length. By using formula(\ref{Lb2}), the contributions to symbol of  two even rows $1^{2b}+1^{2c}, (b>c), $ are
\be
 \Bigg(\!\!\!\ba{c}0\;\;0\cdots 0 \;\; 0\cdots 0\;\; \overbrace{ 1\cdots 1}^{c} \\
\;\;\;0\cdots 0 \;\; \underbrace{1\cdots 1\;1\cdots 1}_{b} \ \ea \Bigg).
\ee
By using formula(\ref{oLb2}), the contributions to symbol of  two odd rows $1^{2b+1}+1^{2c+1}, (b>c),$ are
\be
 \Bigg(\!\!\!\ba{c}0\;\;0\cdots 0 \;\; \overbrace{1\cdots 1\;\;  1\cdots 1}^{b+1} \\
\;\;\;0\cdots 0 \;\; 0\cdots 0\;\underbrace{1\cdots 1}_{c} \ \ea \Bigg).
\ee
Note that for these three rows, we have $\alpha_i=\beta_{i-1}+1$,  $i\geq m-c+1$. So, we draw the conclusion for the first three rows of the partition.

This pairwise rows then continue. If the partition has an odd number of rows,  we draw the conclusion. If the partition has an even number of rows,  then the last row is even. By using formula(\ref{Lb2}), the contribution to symbol of the last row is
  $$\Bigg(\!\!\!\ba{c}0 \;\; 0\cdots 0\;\; 0 \cdots 0 \\
\;\;\;0\cdots 0\; \;1\cdots 1 \ \ea \Bigg),$$
which has no influence on the conclusion.

For a partition with only odd rows  in the $C_n$ theory, by using formula(\ref{oLc2}), the contribution to symbol of the first two odd rows $1^{2b+1}+1^{2c+1},(b>c),$ is
\be
 \Bigg(\!\!\!\ba{c}0\;\;0\cdots 0 \;\; \overbrace{1\cdots 1\;\;  1\cdots 1}^{b+1} \\
\;\;\;0\cdots 0 \;\; 0\cdots 0\;\underbrace{1\cdots 1}_{c} \ \ea \Bigg).
\ee
This pairwise rows then continue, which imply  the conclusion.

For a partition with only even rows  in the $C_n$ theory, by using formula(\ref{Lc2}), they contribute to symbol of the first two even row $1^{2b}+1^{2c},(b>c),$
\be
 \Bigg(\!\!\!\ba{c}0\;\;0\cdots 0 \;\; 0\cdots 0\;\; \overbrace{ 1\cdots 1}^{c} \\
\;\;\;0\cdots 0 \;\;\underbrace{ 1\cdots 1\;1\cdots 1}_{b} \ \ea \Bigg).
\ee
This pairwise rows then continue.  If the partition has an even number of row,  then we draw the conclusion. If the partition has an odd number of row,  the last row is even according to Proposition \ref{Pc}.
By using formula(\ref{Lc2}),  the contribution to symbol of the last row is
 $$\Bigg(\!\!\!\ba{c}0 \;\; 0\cdots 0\;\; 0 \cdots 0 \\
\;\;\;0\cdots 0\; \;1\cdots 1 \ \ea \Bigg)$$
which have no influence on  the conclusion.

For a partition in the $D_n$ theory,   the contribution to symbol of  the first row with $2m$ boxes is
  $$\Bigg(\!\!\!\ba{c}\overbrace{1 \;\; 1\cdots 1\;\; 1 \cdots 1}^m \\
\;\;\;0\cdots 0\; \;0\cdots 0 \ \ea \Bigg).$$
So, we draw the conclusion   for the first row.

According to  Proposition \ref{Pd}, the following two rows of the first row are either both of odd length or both of even. By using formula(\ref{Ld3}), the contributions to symbol of  two even rows $1^{2b}+1^{2c},(b>c),$ are
\be
 \Bigg(\!\!\!\ba{c}0\;\;0\cdots 0 \;\; 0\cdots 0\;\; \overbrace{ 1\cdots 1}^{c} \\
\;\;\;0\cdots 0 \;\; \underbrace{1\cdots 1\;1\cdots 1}_{b} \ \ea \Bigg).
\ee
 Note that for  these three rows, we have  $\alpha_i=\beta_{i+1}-1$ for $i\geq m-c+1$.  By using formula(\ref{oLd3}), the contributions to symbol of two odd rows $1^{2b+1}+1^{2c+1}, (b>c),$ is
\be
 \Bigg(\!\!\!\ba{c}0\;\;0\cdots 0 \;\; \overbrace{1\cdots 1\;\;  1\cdots 1}^{b+1} \\
\;\;\;0\cdots 0 \;\; 0\cdots 0\;\underbrace{1\cdots 1}_{c} \ \ea \Bigg).
\ee
So, we draw the conclusion for the first three rows in the partition.

This pairwise rows then continue. If the partition has an odd number of row,  we draw the conclusion. If the partition has an even number of row,  the last row is even. By using formula(\ref{Ld3}), the contribution to symbol of the last row has the following form
 $$\Bigg(\!\!\!\ba{c}0 \;\; 0\cdots 0\;\; 0 \cdots 0 \\
\;\;\;0\cdots 0\; \;1\cdots 1 \ \ea \Bigg)$$
which have no influence on  the conclusion.
\end{proof}

\begin{rmk}
The parity of  the first row  determine the structure of symbol.
\end{rmk}

Symbol can be seen as  an invariant of the partition.  This proposition imply  two  kinds of   maps preserving symbol possibly.  The first one is the map     between rigid partitions in the $B_n$ theory and rigid partitions  with only even rows     in the $C_n$ theory.  The second one is the  map     between rigid partitions  in the $D_n$ theory and rigid partitions with only odd rows    in the $C_n$ theory.    We propose one map for each case.

First, we propose the following map which take  a special rigid partition with only odd rows in the $B_n$ theory to  a special rigid partition (\ref{special}) with only even rows  in the $C_n$ theory
\bea \label{XS}
X_S:&& m^{2n_m+1}\, (m-1)^{2n_{m-1}}\, (m-2)^{2n_{m-2}  } \cdots 2^{2n_2} \, 1^{2n_1}  \non \\   & \mapsto&
m^{2n_m}\, (m-1)^{2n_{m-1} +2 }\, (m-2)^{2n_{m-2} - 2 } \cdots 2^{n_2+2} \, 1^{2n_1-2}\, ,
\eea
where $m$ has to be odd in order for the first object to be a partition in the $B_n$ theory.  It is clear that the map is a bijection so that $X_S^{-1}$ is well defined.  The map (\ref{XS}) is essentially the '$p_C$ collapse' described in \cite{CM93}. The inver map  $X_S^{-1}$ is essentially the '$p^B$ expansion'  described in \cite{CM93}.

Second, we propose the following map which take  a special rigid partition with only odd rows  in the $C_n$ theory to  a special rigid partition with only even rows  in the $D_n$ theory
\bea \label{YS}
Y_S:&& m^{2n_m+1}\, (m-1)^{2n_{m-1}}\, (m-2)^{2n_{m-2}  } \cdots 2^{2n_2} \, 1^{2n_1}  \non \\   & \mapsto&
m^{2n_m}\, (m-1)^{2n_{m-1} +2 }\, (m-2)^{2n_{m-2} - 2 } \cdots 2^{n_2-2} \, 1^{2n_1+2}\,.
\eea
 where $m$ has to be even in order for the first element to be a $C_k$ partition.   This is a bijection.   The map (\ref{XS}) is essentially the '$p_D$ collapse' described in \cite{CM93}. The inver map  $Y_S^{-1}$ is essentially the '$p^C$ expansion'  described in \cite{CM93}.

\subsection{$S$-duality maps for  rigid surface operators }
For the $B_n$ , $C_n$, and $D_n$ theories,  the  rigid semisimple conjugacy classes $S$  correspond to diagonal matrices with elements $+1$ and $-1$ along the diagonal \cite{GW08}. The   centraliser of  the  diagonal  matrices $S$ are as follows (at the Lie algebra level)
\bea
\so(2n{+}1) &\rar& \so(2k{+}1)\oplus \so(2n-2k) \,, \non \\
\spl(2n) &\rar& \spl(2k)\oplus \spl(2n-2k) \,, \\
\so(2n) &\rar& \so(2k)\oplus \so(2n-2k) \,. \non
\eea
The rigid semisimple surface operators  correspond to pairs of partitions  $(\la';\la'')$ \cite{GW08}.  In the $B_n$ case, $\la'$ is a rigid  $B_k$ partition and  $\la''$ is a  rigid $D_{n-k}$ partition. For the $C_n$ theories,  $\la'$ is a  rigid $C_k$ partition and  $\la''$ is a  rigid $C_{n-k}$ partition.  For the $D_n$ theories,   $\la'$ is a rigid $D_k$ partition and  $\la''$ is a rigid $D_{n-k}$ partition. In the theories under consideration, the rigid unipotent surface operators can be seen  as a special case  $\la''=0$.

The Langlands dual group of $B_n(SO(2n+1))$ is  $C_n(Sp(2n))$. Thus it is expected
$$BC:(\lambda_B;\rho_D)_B\rightarrow (\lambda'_C;\rho^{''}_C)_C$$
where the subscripts $B$ imply it is a partition in the $B_n$ theory. This map $BC$ preserve the symbol in the sense that the  addition of symbols on the two sides are equal
$$\sigma^B(\lambda)+\sigma^D(\rho)=\sigma^C(\lambda')+\sigma^C(\rho^{''}).$$
The group  $D_n(SO(n))$ is self-duality, which implies
$$DD:(\lambda_D;\rho_D)_D\rightarrow (\lambda'_D;\rho^{''}_D)_D.$$

Using maps $X_S$ and $Y_S$, we can propose dual maps of  rigid surface operators  between different theories.  The map $X_S$ imply that the special rigid unipotent surface operators in the $B_n$ and $C_n$ theories are related by $S$-duality. For  rigid unipotent surface operators in $B_n$ theory, the following map proposed in \cite{Wy09}
\begin{equation}\label{SSB}
WB:\,\,\,(\lambda_B, \emptyset)_B\rightarrow (\lambda_{odd}+\lambda_{even},\emptyset)\rightarrow (X_S\lambda_{odd},\lambda_{even})_C.
\end{equation}
First, we split the Young tableau into one Young tableau constructed from the odd rows  $\lambda_{odd}$ and one Young tableau constructed from the even rows  $\lambda_{even}$.   The first tableaux $\lambda_{odd}$  is always a special rigid partition in the $B_k$ theory and the map $X_S$ (\ref{XS}) turns this into a special rigid partition in the $C_k$ theory. While the second  partition $\lambda_{even}$  already corresponds to a special rigid partition in the $C_{n-k}$ theory and is left untouched.

For  rigid unipotent surface operators in $C_n$ theory, the following map is proposed
\begin{equation}\label{SSC}
 WC:\,\,\, (\lambda_C, \emptyset)_C \rightarrow (\lambda_{odd}+\lambda_{even},\emptyset)\rightarrow (X_S^{-1}\lambda_{even},Y_S\lambda_{odd})_B.
\end{equation}

For  rigid unipotent surface operators in $C_n$ theory, the following map is proposed
\begin{equation}\label{SSD}
  WD:\,\,\, (\lambda_D, \emptyset)_D \rightarrow (\lambda_{\mathrm{odd}}+\lambda_{\mathrm{even}},\emptyset)\rightarrow (\lambda_{\mathrm{even}},Y_S\lambda_{\mathrm{odd}})_D.
\end{equation}
These are more  maps  proposed in \cite{Wy09}\cite{ShO06}, which we refer the reader for more details.

Comparing the  table in Section \ref{tb}  with  that in Section \ref{tc},  we find that contributions to symbol of  the even rows in the $B_n$ theory  satisfy the same rules  with   even rows  in the $C_n$ theory.
So we have
\begin{equation}\label{zb}
\sigma^B(\lambda)=\sigma^B(\lambda_{odd})+\sigma^C(\lambda_{even}).
\end{equation}

Comparing  the table in Section \ref{tc} with   that in Section \ref{td},  we find that the odd rows  in the $C_n$ theory  satisfy same rules  with  odd rows  in the $D_n$ theory.
So we have
\begin{equation}\label{zd}
\sigma^D(\lambda)=\sigma^D(\lambda_{even})+\sigma^C(\lambda_{odd}).
\end{equation}

Using the maps (\ref{zb}), (\ref{zd}) and another two maps $X_S$, $Y_S$  preserving symbol, we can prove the following identities  proposed in \cite{ShO06}
\begin{itemize}
  \item For  a rigid semisimple surface operator  $(\lambda, \rho)$  in the  $B_n$ theory,  the symbol is
\begin{equation}\label{SBSB}
\sigma^B_{(\lambda,\rho)}=\sigma^B_{(\lambda)}+\sigma^D_{(\rho)}=\sigma^C_{(\lambda_{even})}+\sigma^C_{(X_S\lambda_{odd})}+\sigma^D_{(\rho_{even})}+\sigma^D_{(Y_S\rho_{odd})}.
\end{equation}

  \item   For  a rigid semisimple surface operator  $(\lambda, \rho)$  in the  $C_n$ theory,  the symbol is
\begin{equation}\label{SC}
\sigma^C_{(\lambda,\rho)}=\sigma^C_{(\lambda)}+\sigma^C_{(\rho)}=\sigma^C_{(\lambda_{even})}+\sigma^D_{(Y_s\lambda_{odd})}+\sigma^C_{(\rho_{even})}+\sigma^D_{(Y_S\rho_{odd})}.
\end{equation}

  \item
 For  a rigid semisimple surface operator  $(\lambda, \rho)$  in the  $D_n$ theory,  the symbol is
\begin{equation}\label{SD}
\sigma^D_{(\lambda,\rho)}=\sigma^D_{(\lambda)}+\sigma^D_{(\rho)}=\sigma^D_{(\lambda_{even})}+\sigma^D_{(Y_s\lambda_{odd})}+\sigma^D_{(\rho_{even})}+\sigma^D_{(Y_S\rho_{odd})}.
\end{equation}
\end{itemize}
Using the identities  (\ref{SBSB}), (\ref{SC}), and (\ref{SD}), we can prove the maps $WB$ (\ref{SSB}), $WC$(\ref{SSD}), and $WC$(\ref{SSD}) preserve symbol.

\section*{Acknowledgments}
We would like to thank  Zhisheng Liu and Ming Huang for  many helpful discussions.
This work was supported by a grant from  the Postdoctoral Foundation of Zhejiang Province.


\end{document}

